\newcommand{\defi}[1]{\textsf{#1}} 
\newcommand{\Aff}{{\mathbb A}}
\newcommand{\C}{{\mathbb C}}
\newcommand{\F}{{\mathbb F}}
\newcommand{\PP}{{\mathbb P}}
\newcommand{\Q}{{\mathbb Q}}
\newcommand{\R}{{\mathbb R}}
\newcommand{\Z}{{\mathbb Z}}
\newcommand{\kbar}{{\overline{k}}}
\newcommand{\calE}{{\mathcal E}}
\newcommand{\calL}{{\mathcal L}}
\newcommand{\calV}{{\mathcal V}}
\newcommand{\calW}{{\mathcal W}}
\newcommand{\OO}{{\mathcal O}}
\DeclareMathOperator{\inv}{inv}
\DeclareMathOperator{\Br}{Br}
\DeclareMathOperator{\ord}{ord}
\DeclareMathOperator{\Sym}{Sym}
\DeclareMathOperator{\Spec}{Spec}
\DeclareMathOperator{\Proj}{Proj}
\newcommand{\sing}{{\operatorname{sing}}}
\newcommand{\injects}{\hookrightarrow}
\newcommand{\isom}{\simeq}
\newcommand{\intersect}{\cap} 
\newcommand{\Union}{\bigcup} 
\newcommand{\tensor}{\otimes}
\newcommand{\directsum}{\oplus} 
\newtheorem{theorem}{Theorem}[section]
\newtheorem{lemma}[theorem]{Lemma}
\newtheorem{proposition}[theorem]{Proposition}
\theoremstyle{definition}
\theoremstyle{remark}
\newtheorem{remark}[theorem]{Remark}
\begin{document}

\title{Existence of rational points on smooth projective varieties}
\subjclass[2000]{Primary 14G05; Secondary 11G35, 11U05, 14G25, 14J20}
\keywords{Brauer-Manin obstruction, Hasse principle, Ch\^atelet surface, conic bundle, rational points}
\author{Bjorn Poonen}
\thanks{This research was supported by NSF grant DMS-0301280.}
\address{Department of Mathematics, University of California, 
        Berkeley, CA 94720-3840, USA}
\email{poonen@math.berkeley.edu}
\urladdr{http://math.berkeley.edu/\~{}poonen/}
\dedicatory{to Jean-Louis Colliot-Th\'el\`ene on his $60^{\text{th}}$ birthday}
\date{December 11, 2007}

\begin{abstract}
Fix a number field $k$.
We prove that if there is an algorithm for deciding whether a smooth
projective geometrically integral $k$-variety has a $k$-point,
then there is an algorithm for deciding whether an arbitrary $k$-variety
has a $k$-point and also an algorithm for computing $X(k)$
for any $k$-variety $X$ for which $X(k)$ is finite.
The proof involves the construction of a one-parameter algebraic family
of Ch\^atelet surfaces such that exactly one of the surfaces
fails to have a $k$-point.
\end{abstract}

\maketitle

\section{Statement of results}\label{S:statements}

A \defi{variety} over a field $k$ is 
a separated scheme of finite type over $k$.
We will consider algorithms (Turing machines) 
accepting as input $k$-varieties where $k$ is a number field.
Each such variety may be presented by a finite number of affine open
patches together with gluing data, so it admits a finite description 
suitable for input into a Turing machine.
We do not require algorithms to run in polynomial time
or any other specified time, but they must terminate with an answer 
for each allowable input.

\begin{theorem}
\label{T:algorithms}
Fix a number field $k$.
Suppose that there exists an algorithm for deciding whether a
regular projective geometrically integral $k$-variety has a $k$-point.
Then
\begin{enumerate}
\item[(i)]
There is an algorithm for deciding whether an arbitrary $k$-variety
has a $k$-point.
\item[(ii)]
There is an algorithm for computing $X(k)$ for any $k$-variety $X$
for which $X(k)$ is finite.
\end{enumerate}
\end{theorem}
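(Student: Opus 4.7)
The plan is to establish (i) by strong induction on $\dim X$, employing the one-parameter Châtelet family promised by the abstract as the central tool, and to derive (ii) from (i) by height enumeration. For the base case $\dim X \le 0$, the variety is a finite disjoint union of spectra of finite extensions of $k$, so one tests $X(k) \ne \emptyset$ directly by checking which of these extensions are equal to $k$. For the inductive step, standard preprocessing reduces to the case that $X$ is integral and geometrically integral: pass to $X_\red$, decompose into irreducible components, normalize, and peel off the non-normal locus as a strictly lower-dimensional variety decidable by induction.

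Now compactify $X$ and apply Hironaka's resolution of singularities (valid since $\Char k = 0$) to obtain a regular projective geometrically integral $\tilde X$ containing a dense open $U$ isomorphic to an open of $X$, with $X \setminus U$ of strictly smaller dimension and hence inductively tractable. After reparametrization, view the paper's family as $\bar\calC \to \PP^1_k$ with its unique $k$-pointless fiber over $\infty$. Construct a morphism $g\colon \tilde X \to \PP^1_k$ with $g^{-1}(\infty) \supseteq \tilde X \setminus U$ by taking the ratio of two sections of a sufficiently ample line bundle on $\tilde X$, one vanishing on $\tilde X \setminus U$ and the other not. Form the fibre product $\calY' = \bar\calC \times_{\PP^1_k} \tilde X$ and, after resolving its singularities, obtain a regular projective geometrically integral $\calY$. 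The defining property of the Châtelet family then yields $\calY(k) \ne \emptyset$ iff there exists $y \in U(k)$ with $g(y) \ne \infty$; applying the hypothesized algorithm to $\calY$ decides this, while the residual case $y \in U \cap g^{-1}(\infty)$ lies in a strictly lower-dimensional subvariety handled by induction.

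For (ii), assuming $X(k)$ is finite, maintain a growing list $P$ of discovered $k$-points enumerated in some computable order (say, by growing height in a chosen embedding $X \hookrightarrow \PP^n_k$), and after each addition apply the algorithm from (i) to the open subvariety $X \setminus P$ to test whether further $k$-points remain; halt and output $P$ once the answer is negative. Finiteness of $X(k)$ forces termination. The principal obstacle is ensuring that $\calY$ is genuinely regular, projective, and geometrically integral while preserving the biconditional $\calY(k) \ne \emptyset \iff (U \cap g^{-1}(\Aff^1))(k) \ne \emptyset$: resolution of singularities can miss $k$-points confined to the singular locus of $\calY'$, so one must either choose $g$ generically enough (Bertini-type transversality against the singular fibres of $\bar\calC \to \PP^1_k$) to keep $\calY'$ smooth at the relevant fibres, or peel the singular locus of $\calY'$ off into an additional inductive call on a strictly lower-dimensional subvariety.
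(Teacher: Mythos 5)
Your outline follows the same broad strategy as the paper---pull back the Ch\^atelet surface bundle along a map to $\PP^1$, resolve singularities, and induct on dimension---but you stop short of the actual proof in a few places, and you flag as your ``principal obstacle'' something that turns out to be a non-issue.

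First, the non-issue: you insist that the cover $\calY$ be geometrically integral so that the hypothesized algorithm applies to it, and you treat this as the main difficulty. But a regular connected $k$-variety that is not geometrically integral automatically has no $k$-point (its ring of global functions contains a proper finite extension of $k$). So it suffices to produce a regular projective $Y \to X$ with $\pi(Y(k))=U(k)$, decompose $Y$ into connected components, apply the hypothesized algorithm to the geometrically integral ones, and declare the remaining ones pointless. This is precisely how the paper's Theorem~\ref{T:cover} feeds into Theorem~\ref{T:algorithms}(i), and it dissolves the geometric-integrality worry entirely; consequently, the whole preprocessing chain ``pass to $X_\red$, normalize, peel off the non-normal locus'' is unnecessary.

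Second, a genuine gap: you invoke ``the one-parameter Ch\^atelet family promised by the abstract'' as a black box, but the conclusion of the theorem is an \emph{algorithm}, so the family $\calW\to\PP^1$ (and your $g$, $\calY'$, and resolution) must be computable from the input data. The paper's construction of $\calW$ uses Lemma~\ref{L:base change}, whose proof invokes Faltings' theorem to show a certain set $T\subset\PP^1(k)$ is finite; Faltings' theorem gives no effective bound on $T$, so the naive construction of the needed $c\in k^\times$ is not algorithmic. The paper spends a dedicated section repairing this, using Parshin's argument to bound $\#T$ and then running the construction over a finite set $\Gamma$ of candidate $c$'s. Your proposal does not notice the effectivity problem at all.

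Third, your treatment of the resolution-of-singularities step is too loose. You correctly observe that resolving $\calY'\to\calY$ can destroy $k$-points sitting in the singular locus of $\calY'$, and you offer two fixes---Bertini transversality, or ``peel off the singular locus inductively''---but neither is carried through, and the second risks circularity (the singular locus of $\calY'$ maps onto all of $\tilde X$ in bad cases, so it need not have smaller dimension over $X$). The paper instead arranges matters so that, for every $t\in U(k)$, both $U\to\Spec k$ and $\calW\to\PP^1$ (hence $Z\to\PP^n$) are smooth above the relevant points; then $Y_0$ is smooth over $k$ along $\pi^{-1}(t)$, so the resolution $Y\to Y_0$ is a local isomorphism there and no $k$-points are lost. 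Your map $g$ built as a ratio of sections also permits $g^{-1}(\infty)\cap U\neq\emptyset$, which would send genuine $k$-points of $U$ into the pointless fiber; the paper avoids this by routing through the map $(\PP^1)^n\dashrightarrow\PP^n$ and Lemma~\ref{L:P^n}, so that $\Aff^n(k)$ is hit exactly. Part~(ii) you handle exactly as the paper does.
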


\begin{remark}\hfill
\label{R:many remarks}
\begin{enumerate}
\item[(a)]
For a field $k$ of characteristic $0$, a $k$-variety is regular
if and only if it is smooth over $k$.
Nevertheless, we have two reasons for sometimes 
using the adjective ``regular'':
\begin{itemize}
\item In some situations, for instance when speaking of families of
varieties, it helps to distinguish the absolute notion (regular)
from the relative notion (smooth).
\item In Section~\ref{S:function fields}, we say what can be said
about the analogue for global function fields.
\end{itemize}
\item[(b)]
For regular proper integral $k$-varieties,
the property of having a $k$-point is a birational invariant,
equivalent to the existence of a (not necessarily rank~1) valuation
$v$ on the function field such that $v$ is trivial on $k$
and $k$ maps isomorphically to the residue field:
this follows from \cite{Nishimura1955} 
and also is close to \cite{Lang1954}*{Theorem~3};
see also \cite{CT-Coray-Sansuc1980}*{Lemme~3.1.1}.
Thus one might wonder whether the decision problem is
easier for regular projective geometrically integral varieties 
than for arbitrary ones.
But Theorem~\ref{T:algorithms}(i) 
says that in fact the two problems are equivalent.
\item[(c)]
For $k=\Q$, Theorem~\ref{T:algorithms}(i) was more or less known:
it is easily deduced from 
a result of R.~Robinson \cite{Smorynski1991}*{\S II.7} that 
the problem of deciding the existence of a rational zero of
a polynomial over $\Q$ 
is equivalent to 
the problem of deciding the existence of a nontrivial rational zero 
of a {\em homogeneous} polynomial over $\Q$.
Robinson's argument generalizes easily to number fields with a real place.
\item[(d)]
Theorem~\ref{T:algorithms} becomes virtually trivial if the word
``projective'' is changed to ``affine''.
On the other hand, there are related statements for affine varieties
that are nontrivial: for instance, if there is an algorithm
for deciding whether any irreducible affine plane curve
of geometric genus at least $2$ has a rational point, 
then there is algorithm for determining 
the set of rational points on any such curve~\cite{Kim2003}.
\item[(e)]
By restriction of scalars, if we have an algorithm for deciding
whether a regular projective geometrically integral variety over $\Q$
has a rational point, then we have an analogous algorithm over any
number field.
\item[(f)]
Remark~\ref{R:curve} will imply that to have algorithms in (i) and (ii)
of Theorem~\ref{T:algorithms} for {\em curves},
it would suffice to be able to decide the existence of rational points
on regular projective geometrically integral {\em $3$-folds}.
(If over $\Q$ one uses Robinson's reduction instead, one would need
an algorithm for $9$-folds!)
\end{enumerate}
\end{remark}

Theorem~\ref{T:algorithms} will be deduced in Section~\ref{S:algorithms}
from the following:

\begin{theorem}
\label{T:cover}
Let $X$ be a projective variety over a number field $k$.
Let $U \subseteq X$ be an open subvariety.
Then there exists a regular projective variety $Y$
and a morphism $\pi\colon Y \to X$
such that $\pi(Y(k))=U(k)$.
Moreover, there exists an algorithm
for constructing $(Y,\pi)$ given $(k,X,U)$.
\end{theorem}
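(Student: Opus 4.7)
The plan is to pull back, along a rational map $X \to \PP^1$ vanishing on $Z := X \setminus U$, the one-parameter family of Ch\^atelet surfaces constructed in later sections: a \emph{smooth} projective morphism $\calV \to \PP^1_k$ with $\calV_0(k) = \emptyset$ and $\calV_t(k) \neq \emptyset$ for every $t \in \PP^1(k) \setminus \{0\}$. The idea is that $k$-points of the pullback must avoid lying over $Z$, since any fiber over $Z$ would be identified with $\calV_0$. The main obstacle is the indeterminacy of the rational map: after resolution, exceptional components above $Z$ could be mapped to nonzero points of $\PP^1$ and thereby contribute spurious $k$-points over $Z$.

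I overcome this by using several auxiliary maps simultaneously. First, I reduce to a principal case: embed $X \subseteq \PP^N$, choose homogeneous forms $f_1,\ldots,f_r$ of a common degree $d$ that together with the ideal of $X$ generate the homogeneous ideal of $Z$, and note that $U(k) = \Union_i (X \setminus V(f_i))(k)$. It therefore suffices to construct, for each $i$, a regular projective $Y_i \to X$ with image $(X \setminus V(f_i))(k)$, and to take $Y := \coprod_i Y_i$. Fix $i$, write $f = f_i$, and choose auxiliary homogeneous forms $g_1,\ldots,g_s$ of degree $d$ with $V(g_1,\ldots,g_s) \cap X = \emptyset$ (for instance, the $d$-th powers of the ambient projective coordinates). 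The pairs $(f,g_j)$ give rational maps $\phi_j \colon X \to \PP^1$ with base loci $V(f,g_j) \subseteq V(f)$; Hironaka-style resolution of indeterminacies and of singularities produces a single regular projective $\tilde X$, a proper birational morphism $\sigma\colon \tilde X \to X$ that is an isomorphism over $X \setminus \Union_j V(f,g_j)$, and morphisms $\psi_j\colon \tilde X \to \PP^1$ simultaneously extending all the $\phi_j \circ \sigma$.

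Now set $Y_i := \tilde X \times_{(\PP^1)^s} \calV^{\times s}$, where $\tilde X \to (\PP^1)^s$ has components $(\psi_1,\ldots,\psi_s)$ and $\calV^{\times s} \to (\PP^1)^s$ is the $s$-fold Cartesian product of $\calV \to \PP^1$ with itself. Since $\calV \to \PP^1$ is smooth, so is $\calV^{\times s} \to (\PP^1)^s$, whence $Y_i \to \tilde X$ is smooth and $Y_i$ is regular projective. A $k$-point of $Y_i$ above $x \in X(k)$ is a tuple $(\tilde x, v_1,\ldots,v_s)$ with $\tilde x \in \tilde X(k)$ mapping to $x$ and $v_j \in \calV_{\psi_j(\tilde x)}(k)$, which forces $\psi_j(\tilde x) \neq 0$ for every $j$. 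If $x \in V(f)(k)$, the common-zero condition gives some $j_0$ with $g_{j_0}(x) \neq 0$, placing $x$ outside the base locus of $\phi_{j_0}$; since $\psi_{j_0}$ agrees with $\phi_{j_0} \circ \sigma$ over this open, $\psi_{j_0}(\tilde x) = \phi_{j_0}(x) = [0 : g_{j_0}(x)] = 0$, a contradiction. Conversely, any $x \in (X \setminus V(f))(k)$ lies outside every base locus, so the unique lift $\tilde x$ satisfies $\psi_j(\tilde x) = [f(x) : g_j(x)] \neq 0$ for all $j$, and each $v_j$ can be chosen in the nonempty fiber $\calV_{\psi_j(\tilde x)}(k)$. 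Hence $\pi_i(Y_i(k)) = (X \setminus V(f_i))(k)$, as required.

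Every ingredient---selecting ideal generators and auxiliary forms, Hironaka's theorems, fiber products, and the Ch\^atelet family construction from later sections---is effective over a number field, so $(Y,\pi)$ can be produced algorithmically from $(k,X,U)$. The central obstacle throughout is the exceptional behavior of the resolution above base loci of the rational maps $\phi_j$; it is circumvented by the fiber product over $(\PP^1)^s$ indexed by multiple auxiliary denominators $g_j$ with no common zero on $X$, which ensures that no $k$-point of $Z$ can lie in every base locus $V(f,g_j)$ at once.
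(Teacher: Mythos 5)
Your decomposition is a genuine alternative to the paper's: where the paper first reduces to the case of a smooth $U$, embeds in $\PP^n$, and then uses the Cremona-type birational map $(\PP^1)^n \dashrightarrow \PP^n$ to pull back $\calW^n$, you instead resolve $X$ once and for all, reduce to $U = X\setminus V(f)$, and exploit a system of auxiliary forms $g_j$ with no common zero on $X$ to get $s$ morphisms $\psi_j\colon\tilde X\to\PP^1$. Your fiber product over $(\PP^1)^s$ neatly replaces the paper's Lemma on $\PP^n$ and avoids the smooth-case induction. The logic on $k$-points is sound: for $x\notin V(f)$, all $\phi_j$ are defined at $x$ with $\phi_j(x)\neq 0$; for $x\in V(f)$, the common-zero condition gives a $j_0$ with $\phi_{j_0}(x)=0$, killing the fiber; and since $\sigma$ is a composition of blowups of smooth centers, $\sigma^{-1}(x)(k)\neq\emptyset$ (so the needed lift $\tilde x$ exists even when $x$ is singular on $X$).

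There are, however, two genuine gaps. The first is the assertion that the Ch\^atelet bundle $\calV\to\PP^1$ (the paper's $\calW\to\PP^1$ from Proposition~\ref{P:Chatelet bundle}) is a \emph{smooth morphism}. It is not: Proposition~\ref{P:Chatelet bundle} only gives smoothness above the rational points $\PP^1(k)$, and the construction necessarily has singular fibers over some $\kbar$-points of $\PP^1$ (the discriminant locus of the pencil $u^2\tilde P_\infty + v^2\tilde P_0$ is nonempty). Consequently $Y_i\to\tilde X$ need not be smooth and $Y_i$ need not be regular. This is fixable: take a resolution $Y_i'\to Y_i$ that is an isomorphism over the smooth locus; the $k$-point $(\tilde x,v_1,\ldots,v_s)$ you produce over $x\notin V(f)$ lies over the $k$-rational points $\psi_j(\tilde x)\in\PP^1(k)$ where $\calV\to\PP^1$ is smooth, hence $Y_i$ is regular there and the $k$-point lifts to $Y_i'$; while over $V(f)$ no $k$-point is created by resolving. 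But the fix must be stated, since the regularity of $Y_i$ as written does not follow.

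The second gap is more serious: the claim that ``every ingredient is effective.'' The Ch\^atelet bundle construction you import depends on Lemma~\ref{L:base change}, whose proof uses Faltings' theorem to show that $\gamma^{-1}(T)$ is finite, and in particular the finite set $T$ (and hence a ``good'' choice of $c$ giving $\gamma = ct^2$) is \emph{not} computable from the data. The paper devotes a whole section to this, replacing the non-effective single choice of $c$ by a computable finite family $\Gamma$ of candidates---using Parshin's argument to bound $\#T$ rather than determine $T$---and taking $Y = \coprod_{c\in\Gamma} Y_c$. Your proof, as written, silently assumes that the auxiliary family $\calV\to\PP^1$ can be effectively produced, and would need to absorb the same workaround.
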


The key special case, from which all others will be derived,
is the case where $U=\Aff^1$ and $X=\PP^1$.
In this case we can arrange also for
$\pi^{-1}(t)$ to be smooth and geometrically integral
for all $t \in \PP^1(k)$: see Proposition~\ref{P:Chatelet bundle}.
Thus we will have a family 
of smooth projective geometrically integral varieties
in which every rational fiber but one has a rational point,
an extreme example of geometry {\em not} controlling arithmetic!

\begin{remark}
Theorem~\ref{T:cover} fails for many fields $k$ that are not
number fields, even for those that have a complicated arithmetic.
Proposition~\ref{P:function fields over C}
implies that it fails for the function field of any variety over $\C$,
for instance.
\end{remark}

\section{Notation}\label{S:notation}

Let $k$ be a number field.
Let $\OO_k$ be the ring of integers in $k$.
If $v$ is a place of $k$, let $k_v$ be the completion of $k$ at $v$.
If $v$ is nonarchimedean, let $\F_v$ be the residue field;
call $v$ \defi{odd} if $\#\F_v$ is odd.
If $a \in \OO_k$ generates a prime ideal, let $v_a$ be the associated
valuation, and let $\F_a=\F_{v_a}$.
For $a \in k$, let $a \gg 0$ mean that $a$ is totally positive,
i.e., positive for every real embedding of $k$.

\section{Conic bundles}\label{S:conic bundles}

A \defi{conic} over $k$ is the zero locus in $\PP^2 = \Proj k[x_0,x_1,x_2]$
of a nonzero degree-$2$ homogeneous polynomial in $k[x_0,x_1,x_2]$.
A \defi{conic bundle} $C$ over a $k$-scheme $B$
is the zero locus in $\PP \calE$ 
of a nowhere vanishing section $s$ of $\Sym^2 \calE$,
where $\calE$ is some rank-$3$ vector sheaf on $B$.
We will consider only the special case where
$\calE = \calL_0 \directsum \calL_1 \directsum \calL_2$
for some line sheaves $\calL_i$
and $s=s_0+s_1+s_2$ where $s_i \in \Gamma(B,\calL_i^{\tensor 2})$;
we then call $C \to B$ a \defi{diagonal conic bundle}.

\begin{remark}
\label{R:smooth conic bundle}
If $B$ is a smooth curve over $k$
and $\sum_{i=0}^2 \ord_P(s_i) \le 1$ for every $P \in B$,
then the total space $C$ is smooth over $k$.
\end{remark}

\section{Ch\^atelet surfaces}\label{S:Chatelet}

Fix $\alpha \in k^\times$ 
and $P(x) \in k[x]$ of degree at most $4$.
Let $V_0$ be the affine surface in $\Aff^3$ given by $y^2 - \alpha z^2 = P(x)$.
We want a smooth projective model $V$ of $V_0$.
Define $\tilde{P}(w,x):=w^4 P(x/w)$;
view $\tilde{P}$ as a section of $\OO(4)$ on $\PP^1 := \Proj k[w,x]$.
The construction of Section~\ref{S:conic bundles} with 
$B=\PP^1$, $\calL_0 = \calL_1 = \OO$, $\calL_2 = \OO(2)$,
$s_0:=1$, $s_1:=-\alpha$, and $s_2:=-\tilde{P}$
gives a diagonal conic bundle $V \to \PP^1$
containing $V_0$ as an affine open subvariety.
Since $V \to \PP^1$ is projective, $V$ is projective over $k$ too.
If $P(x)$ is not identically $0$, then $V$ is geometrically integral.
If $P(x)$ is separable and of degree $3$ or $4$,
then $\tilde{P}(w,x)$ is separable and $V$ is smooth over $k$
by Remark~\ref{R:smooth conic bundle};
in this case $V$ is called the \defi{Ch\^atelet surface given by}
$y^2 - \alpha z^2 = P(x)$.

Iskovskikh~\cite{Iskovskikh1971} showed that the Ch\^atelet surface
over $\Q$ given by
\[
        y^2 + z^2 = (x^2-2)(3-x^2)
\]
violated the Hasse principle.
Several years later it was shown that this violation
could be explained by the Brauer-Manin obstruction,
and that more generally, 
any Ch\^atelet surface over a number field 
given by $y^2-az^2=f(x) g(x)$
with $f$ and $g$ distinct irreducible quadratic polynomials 
satisfies the Hasse principle if and only if there is no
Brauer-Manin obstruction \cite{CT-Coray-Sansuc1980}*{Theorem~B}.
Finally, the two-part paper \cites{CT-Sansuc-SD1987I,CT-Sansuc-SD1987II} 
generalized this to all Ch\^atelet surfaces over number fields.

\begin{proposition}
\label{P:Chatelet}
Over every number field $k$, there exists a Ch\^atelet surface $V$
that violates the Hasse principle.
\end{proposition}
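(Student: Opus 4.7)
The plan is to generalize Iskovskikh's example $y^2+z^2=(x^2-2)(3-x^2)$ to an arbitrary number field $k$. By \cites{CT-Sansuc-SD1987I,CT-Sansuc-SD1987II}, the Brauer--Manin obstruction is the only obstruction to the Hasse principle on Ch\^atelet surfaces over number fields; hence it suffices to exhibit $\alpha \in k^\times$ and a separable quartic $P \in k[x]$ such that the surface $V\colon y^2-\alpha z^2 = P(x)$ satisfies $V(k_v)\ne\emptyset$ at every place $v$ and admits an unramified Brauer class whose local invariants sum to a nonzero element of $\Q/\Z$ on every adelic point.

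Step 1 (Choice of $\alpha$ and an inert prime). Using weak approximation, pick $\alpha \in \OO_k$ that is totally positive but not a square in $k$; then $\alpha$ is a square in $k_v$ at every archimedean $v$. By Chebotarev applied to $k(\sqrt\alpha)/k$, pick an odd prime $\pp$ of $\OO_k$ that is inert in this extension. Then $\alpha$ reduces to a non-square unit in $\F_\pp$, and for any non-square unit $u \in \OO_{k,\pp}^\times$ the Hilbert symbol satisfies $\inv_\pp(\alpha,u) = \tfrac12 \in \Q/\Z$. The prime $\pp$ will carry the entire obstruction.

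Step 2 (Choice of $P$ and local solvability). Choose $a,b \in \OO_k$ by weak approximation so that: $a,b$ are totally positive and squares at every real place; $a,b$ are units at $\pp$ with $ab$ a non-square mod $\pp$; $a,b$ are squares at every finite place dividing $2\alpha$; and $a \ne b$. Set $f(x):=x^2-a$, $g(x):=b-x^2$, $P(x):=f(x)g(x)$, and let $V$ be the associated Ch\^atelet surface. For each place $v$ of $k$ one verifies $V(k_v)\ne\emptyset$: at real places the right-hand side becomes a sum of two squares for $x$ in a suitable interval and $\alpha \in k_v^{\times 2}$ handles the conic; at odd primes of good reduction, Lang--Weil produces a smooth $\F_v$-point and Hensel lifts it; at the finitely many bad finite places, local points are written down explicitly using the conditions above.

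Step 3 (Brauer--Manin obstruction). Take $A := (\alpha, f(x)) \in \Br k(V)$. A residue computation along the irreducible divisors shows $A$ is unramified on $V$: the residue along $\{f=0\}$ vanishes tautologically, while along $\{g=0\}$ the identity $fg = y^2-\alpha z^2$ exhibits $f$ as a norm from $k(V)(\sqrt\alpha)$ up to the unit $g$, so $A$ is split there. Hence $A \in \Br V$, and $\inv_v A(x_v)$ is defined for every $v$ and every $x_v \in V(k_v)$. The conditions imposed in Step 2 are designed so that $\inv_v A(x_v) = 0$ for every $v \ne \pp$ and every $x_v \in V(k_v)$, while at $\pp$ one obtains $\inv_\pp A(x_\pp) = \tfrac12$ independently of $x_\pp$ (because the $\pp$-valuation of $f(x_\pp)$ is forced to be odd for every $x_\pp \in V(k_\pp)$ by the non-square condition on $ab$). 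Consequently
\[
\sum_v \inv_v A(x_v) \;=\; \tfrac12 \;\ne\; 0
\]
for every adelic point of $V$, giving the desired Brauer--Manin obstruction; therefore $V(k) = \emptyset$.

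The hard part is the simultaneous coordination in Step~2: the parameters $a,b$ must be tuned by weak approximation so that $V$ has local points at every bad place while the local invariant of $A$ vanishes on $V(k_v)$ for every $v\ne\pp$ and equals $\tfrac12$ constantly on $V(k_\pp)$. Achieving this rigidity, which mimics Iskovskikh's arithmetic in the relevant residue fields, is the technical heart of the argument.
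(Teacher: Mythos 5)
Your choice of obstruction prime is structurally wrong, and this breaks Step~3 irreparably. You pick $\pp$ odd and \emph{inert} in $k(\sqrt\alpha)/k$, so $\alpha$ is a nonsquare \emph{unit} at $\pp$ and $k_\pp(\sqrt\alpha)/k_\pp$ is \emph{unramified}. For an unramified quadratic extension at an odd place, the Hilbert symbol $(\alpha,\cdot)_\pp$ kills all units; equivalently, $(\alpha,t)_\pp = (-1)^{v_\pp(t)}$. So $\inv_\pp A(x_\pp)$ depends only on the parity of $v_\pp\bigl(f(x_\pp)\bigr)$, and it vanishes whenever $f(x_\pp)$ is a $\pp$-adic unit. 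But with $f=x^2-a$ and $g=b-x^2$ both monic (up to sign) with $a,b$ units at $\pp$, the set of $x_\pp \in k_\pp$ with $f(x_\pp)g(x_\pp)$ a unit is huge: take any $x_\pp$ with $v_\pp(x_\pp)<0$ (then $f(x_\pp)=x_\pp^2(1-a/x_\pp^2)$ is a unit square times $x_\pp^2$), or any unit $x_\pp$ with $x_\pp^2\not\equiv a,b\pmod\pp$. For such $x_\pp$ the right-hand side is a unit, hence a norm from the unramified extension, so there \emph{are} $k_\pp$-points of $V$ above them, and on every one of them $\inv_\pp A = 0$. Your assertion that ``the $\pp$-valuation of $f(x_\pp)$ is forced to be odd for every $x_\pp \in V(k_\pp)$ by the non-square condition on $ab$'' is therefore false, and no amount of weak-approximation tuning of $a,b$ repairs it: the failure happens for generic $x_\pp$.

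The paper's construction avoids exactly this. There one takes $\alpha=ab$ with $b$ generating a prime; then $v_b(\alpha)=1$, so $k_{v_b}(\sqrt{ab})/k_{v_b}$ is \emph{ramified}, and $(ab,\cdot)_{v_b}$ computes the Legendre symbol of units mod~$b$. The quartic $P(x)=(x^2+c)(ax^2+ac+1)$ is then tuned, via $ac\equiv -1\pmod b$ and $a$ a nonsquare mod~$b$, so that at every $P_v\in V(k_{v_b})$ exactly one of the two descriptions $(ab,x^2+c)$, $(ab,ax^2+ac+1)$ evaluates on a nonresidue mod~$b$, covering all three valuation ranges $v(x)>0$, $v(x)=0$, $v(x)<0$; note in particular that the leading coefficient $a$ of the second factor handles $v(x)<0$. (Iskovskikh's example hides the same phenomenon: his $\alpha=-1$ is a unit, but the obstruction lives at $p=2$, where $\Q_2(i)/\Q_2$ is nonetheless ramified; once you move the obstruction to an odd prime, a unit $\alpha$ cannot work, and you must make $\alpha$ have odd valuation there.) In short, the gap is not merely the unexecuted Hilbert-symbol bookkeeping you flag at the end; the prime $\pp$ and the quartic you chose cannot produce the claimed constant local invariant, and the argument needs to be redesigned along the lines above.
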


The rest of this section is devoted to 
the proof of Proposition~\ref{P:Chatelet},
so a reader interested in only the case $k=\Q$ may accept the
Iskovskikh example and proceed to Section~\ref{S:Chatelet bundles}.
We generalize the argument presented in \cite{Skorobogatov2001}*{p.~145}.

By the Chebotarev density theorem,
we can find $b \in \OO_k$ generating a prime ideal 
such that $b \gg 0$ and $b \equiv 1 \pmod{8 \OO_k}$.
Similarly we find $a \in \OO_k$ generating a prime ideal
such that $a \gg 0$ and $a \equiv 1 \pmod{8 \OO_k}$
and $a$ is a not a square modulo $b$.
We may assume that $\#\F_a,\#\F_b > 5$.
Fix $c \in \OO_k$ such that $b|(ac+1)$.

Let $(x,y)_v \in \{\pm1\}$ be the $v$-adic Hilbert symbol.
Define $(x,y)_b:=(x,y)_{v_b}$.
We will need the following Hilbert symbol calculations later:
\begin{lemma}
\label{L:Hilbert}
We have
\begin{enumerate}
\item[(i)] $(-1,a)_v=1$ for all $v$.
\item[(ii)] $(-1,b)_v=1$ for all $v$.
\item[(iii)] $(ab,a)_b=-1$.
\item[(iv)] $(ab,c)_b=-1$.
\end{enumerate}
\end{lemma}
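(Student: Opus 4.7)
The plan is to split the four assertions into two groups. Parts (i) and (ii) are global statements that I will prove by a place-by-place analysis, handling every place directly except possibly $v_a$ (resp.\ $v_b$), at which I will invoke Hilbert reciprocity $\prod_v(x,y)_v=1$. Parts (iii) and (iv) are purely local at $v_b$, and I will reduce them to tame-symbol (Legendre) computations via bilinearity of the Hilbert symbol together with the identity $(x,-x)_v=1$.

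For (i) the argument proceeds place by place, and (ii) is identical with $b$ replacing $a$. At complex places the symbol is trivially $1$. At real places, $a\gg 0$ makes $a$ a square in $k_v\isom\R$, so the symbol is $1$. At odd finite $v$ not dividing $a$, both $-1$ and $a$ are units, and every pair of units has trivial Hilbert symbol at an odd nonarchimedean place. At places $v\mid 2$, the hypothesis $a\equiv 1\pmod{8\OO_k}$ gives $v(a-1)\ge 3e_v\ge 2e_v+1$, where $e_v$ is the absolute ramification index; Hensel's lemma then forces $a$ to be a square in $k_v^\times$, so the symbol is $1$ here too. Thus $(-1,a)_v=1$ for every place $v\ne v_a$, and Hilbert reciprocity supplies the missing factor.

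For (iii), bilinearity combined with the identity $(a,-a)_b=1$ gives $(a,a)_b=(-1,a)_b$, which is $1$ by~(i), so $(ab,a)_b=(b,a)_b=(a,b)_b$. At the odd place $v_b$, where $b$ is a uniformizer and $a$ is a unit, the tame symbol formula identifies $(a,b)_{v_b}$ with the Legendre symbol $\left(\frac{a}{\F_b}\right)$, which equals $-1$ by the non-square assumption on $a$. Part (iv) is similar: since $b$ is prime and $b\mid ac+1$, $c$ is coprime to $b$ and hence a unit at $v_b$, so $(a,c)_b=1$ as above. For the remaining factor, $(b,c)_{v_b}$ is the Legendre symbol of $c\bmod b$, and $ac\equiv -1\pmod b$ together with (ii) (which yields $\left(\frac{-1}{\F_b}\right)=(-1,b)_{v_b}=1$) and the calculation in (iii) (which gives $\left(\frac{a}{\F_b}\right)=-1$) makes this symbol equal to $-1$.

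The only nontrivial input is the square-class analysis at places above $2$ in (i) and (ii): one must confirm that the modest-looking congruence $\equiv 1\pmod{8\OO_k}$ is strong enough to produce a square in every $2$-adic completion, including highly ramified ones. The decisive inequality is $v(8)=3e_v\ge 2e_v+1$, which is precisely why the factor of $8$ was put into the earlier Chebotarev choice of $a$ and $b$; everything else in the lemma is formal manipulation of Hilbert symbols and tame-symbol evaluations in $\F_b$.
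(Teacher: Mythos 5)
Your proof is correct and follows essentially the same route as the paper's: place-by-place analysis with Hilbert reciprocity closing the gap at $v_a$ (resp.\ $v_b$) in (i)--(ii), then bilinearity plus the tame-symbol/Legendre interpretation at $v_b$ together with the congruence $ac\equiv -1\pmod b$ for (iii)--(iv). The only differences are cosmetic: you spell out the dyadic Hensel estimate $v(a-1)\ge 3e_v\ge 2e_v+1$ that the paper leaves implicit, and in (iii) you factor through $(a,a)_b=1$ rather than the paper's $(-ab,a)_b=(-a,a)_b(b,a)_b$.
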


\begin{proof}\hfill
\begin{enumerate}
\item[(i)] For $v$ archimedean or $2$-adic, it follows
from $a \in k_v^{\times 2}$.
For all other $v$ except $v_a$, it follows from $v(-1)=v(a)=0$.
For $v=v_a$, it follows from the product formula.
\item[(ii)] The proof is the same as that of (i).
\item[(iii)]
By (i), $(ab,a)_b=(-ab,a)_b = (-a,a)_b (b,a)_b = 1 \cdot (b,a)_b = -1$,
by choice of $a$.
\item[(iv)]
Since $b|(ac+1)$, we have $(ab,ac)_b=(ab,-1)_b=(a,-1)_b (b,-1)_b = 1$
by (i) and (ii).
Divide by (iii) to get $(ab,c)_b=-1$. \qedhere
\end{enumerate}
\end{proof}

Let $V$ be the Ch\^atelet surface given by
\begin{equation}
\label{E:Chatelet}
        y^2 - a b z^2 = (x^2 + c)(a x^2 + a c + 1).
\end{equation}
(The quadratic factors on the right are separable and generate the unit ideal
of $k[x]$, so $V$ is smooth over $k$.)

\begin{lemma}
\label{L:local points}
The variety $V$ has a $k_v$-point for every place $v$ of $k$.
\end{lemma}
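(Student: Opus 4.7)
I would use the conic bundle structure $V \to \PP^1$ from Section~\ref{S:conic bundles}: $V(k_v) \neq \emptyset$ iff some fiber over a $k_v$-point of $\PP^1$ has a $k_v$-point. The fiber over $x_0 \in k_v$ is the plane conic $y_0^2 - ab\,y_1^2 = P(x_0)\,y_2^2$, solvable over $k_v$ iff $P(x_0) = 0$ or $(P(x_0), ab)_v = 1$; the fiber over $\infty$, determined by the leading coefficient of $P$ (which is $a$), is the conic $y_0^2 - ab\,y_1^2 = a\,y_2^2$, solvable iff $(a, ab)_v = 1$. The proof is a case analysis by place.

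The easy places are those where I can exhibit $ab \in k_v^{\times 2}$ directly, so that every fiber has a point. At archimedean $v$, $a, b \gg 0$ gives $ab > 0$. At $v \mid 2$, the congruence $a, b \equiv 1 \pmod{8 \OO_k}$ together with $v(8) = 3 e_v \geq 2 e_v + 1$ (where $e_v$ is the absolute ramification index) places $a, b$ in $k_v^{\times 2}$ by the standard local-square criterion. For nonarchimedean $v$ with $v \nmid 2$ and $v \notin \{v_a, v_b\}$, both $a$ and $b$ are units at $v$, and the fiber at $\infty$ already has a point: $(a, ab)_v = (a, a)_v (a, b)_v = (a, -1)_v \cdot 1 = 1$ by Lemma~\ref{L:Hilbert}(i) and the triviality of tame Hilbert symbols of pairs of units at odd residue characteristic.

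The interesting cases are $v = v_a$ and $v = v_b$, where the fiber at $\infty$ fails: Lemma~\ref{L:Hilbert}(iii) gives $(a, ab)_{v_b} = -1$, and combining the product formula with the preceding cases forces $(a, b)_{v_a} = (a, b)_{v_b} = -1$ and hence $(a, ab)_{v_a} = -1$ as well. Instead I would choose $x_0 = t \in \OO_v$. At $v_a$, the element $a t^2 + a c + 1 \equiv 1 \pmod{v_a}$ is a principal unit, hence a square in $k_{v_a}^\times$ by Hensel, so $(P(t), ab)_{v_a}$ collapses to $(t^2 + c, ab)_{v_a}$, which for $t^2 + c$ a unit equals the Legendre symbol of $t^2 + c$ modulo $a$; the identity $((c-1)/2)^2 + c = ((c+1)/2)^2$ produces such a $t$, with the degenerate subcases $c \equiv 0$ and $c \equiv -1 \pmod{v_a}$ handled by taking $t$ any unit or $t = \sqrt{-c}$, respectively. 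At $v_b$, the congruence $a c \equiv -1 \pmod{v_b}$ simplifies $P(t) \equiv a t^2 (t^2 + c) \pmod{v_b}$, so for $t \in \OO_{v_b}^\times$ with $t^2 + c$ a unit, $(P(t), ab)_{v_b}$ equals $-1$ times the Legendre symbol of $t^2 + c$ modulo $b$, using that $a$ is a non-square mod $b$.

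The hard part is this final reduction at $v_b$: finding $t \in \F_b^\times$ such that $t^2 + c$ is a non-square in $\F_b^\times$. Since $-c \equiv a^{-1} \pmod{v_b}$ is a non-square modulo $b$ (as $a$ is), $t^2 + c$ never vanishes on $\F_b$; a counting argument using $\#\F_b > 5$ (bounding the number of solutions of $s^2 - t^2 = c$ in $\F_b^\times \times \F_b^\times$) then shows that the image $\{t^2 + c : t \in \F_b^\times\}$ cannot lie entirely inside the nonzero squares of $\F_b$, so a suitable $t_0$ exists, and lifting $t_0$ to $\OO_{v_b}^\times$ via Hensel finishes the proof.
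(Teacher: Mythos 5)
Your proof is correct, and it follows the same overall strategy as the paper: split by place, observe $ab\in k_v^{\times 2}$ at archimedean and $2$-adic places, handle good odd places by taking $v(x)<0$ (you phrase this as looking at the fibre over $\infty$ and computing $(a,ab)_v$ via Lemma~\ref{L:Hilbert}(i), which amounts to the same thing), and treat $v_a$, $v_b$ separately. The one genuine variation is at $v=v_a$: the paper repeats a point-counting argument over $\F_a$ to produce $x$ with $x^2+c\in \F_a^{\times 2}$, whereas you use the polynomial identity $((c-1)/2)^2+c=((c+1)/2)^2$ to exhibit such an $x$ directly, which is cleaner and avoids the second count (note that your subcase handling is over-cautious: since $((c+1)/2)^2$ is a square in $k$ itself, the symbol $(t^2+c,ab)_{v_a}$ vanishes whatever the $v_a$-valuation of $(c+1)/2$, and only the exact equality $c=-1$ needs separate treatment via the zero fibre). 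At $v=v_b$ you reformulate the paper's count of solutions to $y^2=a(x^2+c)$ as the equivalent requirement that $t^2+c$ be a nonsquare mod $b$; your count of solutions to $s^2-t^2=c$ in $\F_b^\times\times\F_b^\times$ shows this cleanly (and in fact needs only $\#\F_b>1$, so the hypothesis $\#\F_b>5$ is not really exercised here, but it does no harm). The explicit Hilbert-symbol bookkeeping you carry out — including the product-formula deduction that $(a,ab)_{v_a}=(a,ab)_{v_b}=-1$, showing the $\infty$-fibre fails at exactly these two places — is not in the paper's proof of this lemma, but it is correct and clarifies why $v_a,v_b$ are the hard places.
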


\begin{proof}
Suppose that $v$ is archimedean or $2$-adic.
Then $ab \in k_v^{\times 2}$, so $V$ has a $k_v$-point.

Suppose that $v$ is odd and $v \notin \{v_a,v_b\}$.
Choose $x \in k$ with $v(x)<0$.
Then the right hand side of \eqref{E:Chatelet}
has even valuation and is hence a norm for the unramified
extension $k_v(\sqrt{ab})/k_v$.
So $V$ has a $k_v$-point.

Suppose that $v=v_b$.
The number of solutions to $y^2=a(x^2+c)$ over $\F_b$
with $x^2+c \ne 0$ and $x \ne 0$
is at least $(\#\F_b+1)-2-2-2 > 0$.
Choose $x \in \OO_k$ reducing to such a solution.
The right hand side of \eqref{E:Chatelet} is congruent
modulo $b$ to $(x^2+c)(ax^2)$, so by Hensel's lemma
it is in $k_v^{\times 2}$.
Thus $V$ has a $k_v$-point.

Suppose that $v=v_a$.
The same argument as in the previous paragraph shows
that we may choose $x \in \OO_k$ such that $x^2+c \in k_v^{\times 2}$.
The other factor $a x^2 + a c + 1$ is $1 \bmod a$, hence in $k_v^{\times 2}$.
Therefore the right hand side of \eqref{E:Chatelet} is in $k_v^{\times 2}$,
so $V$ has a $k_v$-point.
\end{proof}

Let $\kappa(V)$ be the function field of $V$.
Let $A \in \Br \kappa(V)$ be the class of
the quaternion algebra $(ab,x^2+c)$.
Then $A$ equals the class of $(ab,ax^2+ac+1)$ and of $(ab,1+c/x^2)$,
so $A \in \Br V$.
We will show that $A$ gives a Brauer-Manin obstruction to the Hasse 
principle.
For $P_v \in V(k_v)$, 
let $A(P_v) \in \Br k_v$ be the evaluation of $A$ at $P_v$.
Let $\inv_v \colon \Br k_v \injects \Q/\Z$ be the usual invariant map.

\begin{lemma}
\label{L:inv}
For any $P_v \in V(k_v)$, 
\[
\inv_v(A(P_v)) = 
\begin{cases}
0, &\text{ if $v \ne v_b$,} \\
1/2 &\text{ if $v=v_b$.}
\end{cases}
\]
\end{lemma}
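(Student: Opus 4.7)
My plan is a case analysis on $v$, using the three equivalent representatives of $A$ in $\Br V$ given just before the lemma, namely $(ab,x^2+c)$, $(ab,ax^2+ac+1)$, and $(ab,1+c/x^2)$, and choosing whichever is regular at $P_v$. For $v$ archimedean or above $2$, both $a$ and $b$ are squares in $k_v$ (positivity of $a,b\gg 0$ at real places; Hensel's lemma from $a,b\equiv 1\pmod{8\OO_k}$ at places above $2$; trivially at complex ones), so $ab\in k_v^{\times 2}$ and every symbol $(ab,\cdot)_v$ vanishes. For odd $v\notin\{v_a,v_b\}$, $ab$ is a $v$-unit so $k_v(\sqrt{ab})/k_v$ is trivial or unramified and $(ab,t)_v$ depends only on the parity of $v(t)$; the equation $(x^2+c)(ax^2+ac+1)=y^2-abz^2$ exhibits the product as a norm (hence of even valuation), while $(ax^2+ac+1)-a(x^2+c)=1$ forbids both factors from having positive valuation simultaneously, so at least one factor has even valuation and the corresponding representative gives invariant $0$ (points of $V\setminus V_0$ or where one of the factors vanishes are handled by the alternate forms, which each reduce to $(ab,\textrm{square})$). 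For $v=v_a$, the factor $ax^2+ac+1=a(x^2+c)+1\equiv 1\pmod a$ is a square in $k_{v_a}$ by Hensel's lemma (odd residue characteristic), so $(ab,ax^2+ac+1)_{v_a}=0$.

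The substantive case is $v=v_b$, which I would handle in two stages. First, every $P_v\in V(k_{v_b})$ lies in $V_0$ with $v_b(x)\ge 0$: the complement $V\setminus V_0$ consists of the conic fiber $y_0^2-aby_1^2-ay_2^2=0$ over $[0:1]\in\PP^1$ together with the ``line at infinity'' $y_0^2-aby_1^2=0$ of the conic bundle, and neither has $k_{v_b}$-points because $a$, $ab$, and $-b$ are all non-squares in $k_{v_b}$; moreover, if $v_b(x)=-k<0$, writing $x=x_0/b^k$ and setting $Y=b^{2k}y$, $Z=b^{2k}z$, the defining equation becomes $Y^2-abZ^2=(x_0^2+cb^{2k})(ax_0^2+(ac+1)b^{2k})$, which reduces modulo $b$ to $Y^2\equiv ax_0^4\pmod b$, contradicting the non-residuosity of $a$. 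Second, $x^2+c$ is necessarily a $v_b$-unit at such a point: if $v_b(x)=0$ and $v_b(x^2+c)\ge 1$, then $x^2\equiv -c\equiv a^{-1}\pmod b$ (using $b\mid ac+1$), again a contradiction; if $v_b(x)\ge 1$, then $x^2+c\equiv c\equiv -a^{-1}\pmod b$, a unit. The extension $k_{v_b}(\sqrt a)/k_{v_b}$ is unramified, so $(a,u)_{v_b}=0$ for every unit $u$, and $(ab,x^2+c)_{v_b}=(b,x^2+c)_{v_b}=\left(\frac{x^2+c}{b}\right)$ via the tame symbol. In the subcase $v_b(x)=0$, reducing the defining equation mod $b$ and using $ax^2+ac+1\equiv ax^2\pmod b$ gives $a(x^2+c)\equiv (y/x)^2\pmod b$, so $x^2+c$ is a non-residue; in the subcase $v_b(x)\ge 1$, $\left(\frac{-a^{-1}}{b}\right)=\left(\frac{-1}{b}\right)\left(\frac{a}{b}\right)^{-1}=-1$ by Lemma~\ref{L:Hilbert}(ii) and the choice of $a$. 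Either way the invariant equals $1/2$.

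The main obstacle I anticipate is the geometric step at $v_b$: ruling out $k_{v_b}$-points outside $V_0\cap\{v_b(x)\ge 0\}$ requires inspecting the projective compactification of the conic bundle across multiple charts. This step is essential to the consistency of the lemma, since the representative $(ab,1+c/x^2)$ would spuriously evaluate to $0$ at a hypothetical $k_{v_b}$-point with $v_b(x)<0$; the lemma is compatible with the three expressions for $A$ only because the above analysis rules such points out.
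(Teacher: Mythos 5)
Your proof is essentially correct in its conclusions, but it takes a more laborious route than the paper, and it has one small gap. The paper's proof opens with the observation that the map $P_v \mapsto \inv_v(A(P_v))$ is locally constant on $V(k_v)$ and that $V_0(k_v)$ is dense in $V(k_v)$ (implicit function theorem on the smooth variety $V$), so one may assume $P_v \in V_0(k_v)$ at the outset. This disposes of all the boundary points $V \setminus V_0$ in one line, whereas you propose an explicit geometric analysis of the compactification across charts, which you yourself flag as the weak point of your argument. Likewise, the paper splits the $v_b$ case as $v(x) \le 0$ versus $v(x) > 0$ and, crucially, uses the representative $(ab,\,ax^2+ac+1)$ in the first subcase: since $b \mid (ac+1)$, one gets $(ab,\,ax^2+ac+1)_b = (ab,\,ax^2)_b = (ab,a)_b = -1$ uniformly for $v(x) \le 0$, with no need to rule out $v(x) < 0$. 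Your observation that the other representative $(ab,\,1+c/x^2)$ would give invariant $0$ at a hypothetical point with $v_b(x) < 0$ is correct, and your mod-$b$ argument ruling such points out is also correct — but this extra work is forced on you only by the choice of representative; the paper sidesteps it entirely by picking the one that works for all $v(x) \le 0$ at once. Your $v_b(x) = 0$ and $v_b(x) \ge 1$ subcases then recompute with Legendre symbols what the paper packages into Lemma~\ref{L:Hilbert}(iii) and (iv); both are fine.

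There is one genuine gap: in your $v = v_a$ case you argue only from $ax^2 + ac + 1 \equiv 1 \pmod a$, which presupposes $v_a(x) \ge 0$. If $v_a(x) < 0$ this factor has negative odd valuation and the congruence argument does not apply; you would need to switch to the representative $(ab,\,x^2+c)$ and note that $x^2 + c = x^2(1 + c/x^2)$ with $v_a(c/x^2) > 0$ is a square in $k_{v_a}$, as the paper does. This is an easy fix, but as written the case $v_a(x) < 0$ is unaddressed. Everything else — the archimedean/dyadic vanishing from $a,b \in k_v^{\times 2}$, the odd $v \notin \{v_a, v_b\}$ case via the Bezout relation $(ax^2+ac+1) - a(x^2+c) = 1$, and the $v_b$ computations — checks out.
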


\begin{proof}
By continuity, we may assume $P_v \in V_0(k_v)$.
Suppose that $v$ is archimedean or $2$-adic.
Then $ab \in k_v^\times$, so $A$ maps to $0$ in $\Br k_v(V)$.
Hence $\inv_v(A(P_v))=0$.

Suppose that $v$ is odd and $v \notin \{v_a,v_b\}$.
If $v(x)<0$ at $P_v$, then $v(x^2+c)$ is even,
so $\inv_v(A(P_v))=0$.
If $v(x) \ge 0$, then either $x^2+c$ or $ax^2+ac+1$ is a $v$-adic unit,
so using an appropriate description of $A$ shows that $\inv_v(A(P_v))=0$.

Suppose that $v=v_a$.
If $v(x)<0$ at $P_v$, then $x^2+c \in k_v^{\times 2}$,
so $\inv_v(A(P_v))=0$.
If $v(x) \ge 0$, then $a x^2 + a c + 1$ is $1 \bmod a$
so it is in $k_v^{\times 2}$, and again $\inv_v(A(P_v))=0$.

Finally, suppose that $v=v_b$.
If $v(x) \le 0$, then $b|(ac+1)$ implies 
$(ab,ax^2+ac+1)_b=(ab,ax^2)_b=(ab,a)_b = -1$ by Lemma~\ref{L:Hilbert}(iii).
If $v(x)>0$, then $(ab,x^2+c)_b = (ab,c)_b = -1$ by Lemma~\ref{L:Hilbert}(iv).
In either case, $\inv_v(A(P_v))=1/2$.
\end{proof}

Lemma~\ref{L:inv} implies that $V$ has no $k$-point.
This completes the proof of Proposition~\ref{P:Chatelet}.

\section{Ch\^atelet surface bundles}\label{S:Chatelet bundles}

By a \defi{Ch\^atelet surface bundle over $\PP^1$} 
we mean a flat proper morphism
$\calV \to \PP^1$ such that the generic fiber is a Ch\^atelet surface.
For $t \in \PP^1(k)$, we let $\calV_t$ be the fiber above $t$.

We retain the notation of Section~\ref{S:Chatelet}.
Let $\tilde{P}_0(w,x) \in k[w,x]$ be the homogeneous form of degree-$4$
obtained by homogenizing the right hand side of~\eqref{E:Chatelet}.
Let $\tilde{P}_\infty(w,x)$
be any irreducible degree-$4$ form in $k[w,x]$.
Thus $\tilde{P}_0$ and $\tilde{P}_\infty$ are linearly independent.

Let $\calV$ be the diagonal conic bundle over 
$\PP^1 \times \PP^1:= \Proj k[u,v] \times \Proj k[w,x]$
obtained by taking $\calL_0 = \calL_1 := \OO$, $\calL_2 := \OO(1,2)$,
$s_0:=1$, $s_1:=-ab$, and $s_2:=-(u^2 \tilde{P}_\infty + v^2 \tilde{P}_0)$.
Composing $\calV \to \PP^1 \times \PP^1$ with the first projection
$\PP^1 \times \PP^1 \to \PP^1$
lets us view $\calV$ as a Ch\^atelet surface bundle over 
$\PP^1=\Proj k[u,v]$ with projective geometrically integral fibers.
If $u,v \in k$ are not both $0$,
the fiber above $(u:v) \in \PP^1(k)$ is the Ch\^atelet surface given by
\[
        y^2 - a b z^2 = u^2 \tilde{P}_\infty(1,x) + v^2 \tilde{P}_0(1,x),
\]
if smooth over $k$.
In particular, the fiber $\calV_{(0:1)}$ is isomorphic to $V$.

\begin{lemma}
\label{L:thin set}
The set of specializations $(u:v) \in \PP^1(k)$
such that $u^2 \tilde{P}_\infty + v^2 \tilde{P}_0 \in k[w,x]$ is reducible 
(for any or all choices of $(u,v) \in k^2 -\{(0,0)\}$ representing $(u:v)$)
is a thin set in the sense of~\cite{SerreMordellWeil}*{\S9.1}.
\end{lemma}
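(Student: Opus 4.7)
The plan is to reduce to Hilbert's irreducibility theorem (HIT) by showing that the bihomogeneous form $F(u,v,w,x) := u^2 \tilde{P}_\infty(w,x) + v^2 \tilde{P}_0(w,x)$ is irreducible in $k[u,v,w,x]$, and then invoking HIT for a dehomogenization of $F$.

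To prove $F$ is irreducible, I suppose $F=GH$ is a nontrivial factorization; since $F$ is bihomogeneous of bidegree $(2,4)$ in $((u,v);(w,x))$, I may take $G,H$ bihomogeneous of bidegrees $(d_1,e_1),(d_2,e_2)$ summing to $(2,4)$. If some $d_i=0$, the corresponding factor lies in $k[w,x]$ and divides both $\tilde{P}_\infty$ (the $u^2$-coefficient of $F$) and $\tilde{P}_0$ (the $v^2$-coefficient); since $\tilde{P}_\infty$ is irreducible of degree $4$, this forces $\tilde{P}_0$ to be a scalar multiple of $\tilde{P}_\infty$, contradicting linear independence. Otherwise $d_1=d_2=1$; writing $G=ug_1+vg_0$ and $H=uh_1+vh_0$, and matching coefficients of $u^2,uv,v^2$ yields
\[
g_1h_1=\tilde{P}_\infty,\qquad g_0h_0=\tilde{P}_0,\qquad g_1h_0+g_0h_1=0.
\]
Irreducibility of $\tilde{P}_\infty$ forces one of $g_1,h_1$ (say $g_1$) to be a nonzero scalar, and then bihomogeneity forces $e_1=0$ so that $g_0$ is also a scalar. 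Combining the three relations now gives $\tilde{P}_0=-(g_0/g_1)^2\tilde{P}_\infty$, again contradicting linear independence.

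Next, dehomogenize by setting $v=w=1$: let $G(u,x):=F(u,1,1,x)=u^2\tilde{P}_\infty(1,x)+\tilde{P}_0(1,x)\in k[u,x]$. Any nontrivial factorization of $G$ would rehomogenize (bidegree by bidegree) to one of $F$, so $G$ is irreducible; moreover the $x^4$-coefficient of $\tilde{P}_\infty(1,x)$ is nonzero (otherwise $w\mid\tilde{P}_\infty$, contradicting the irreducibility of $\tilde{P}_\infty$), so $G$ has positive degree in both variables. Hilbert's irreducibility theorem, as in \cite{SerreMordellWeil}*{\S9.2}, then says that the set of $u_0\in k$ with $G(u_0,x)\in k[x]$ reducible is thin. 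For $(u:v)\in\PP^1(k)$ with $v\ne 0$, reducibility of the binary form $u^2\tilde{P}_\infty+v^2\tilde{P}_0$ in $k[w,x]$ coincides with reducibility of $G(u/v,x)$ in $k[x]$, except at finitely many $(u:v)$ where $w$ or $x$ becomes a factor of the specialized form; and the remaining point $(1:0)$ specializes $F$ to $\tilde{P}_\infty$, which is irreducible by hypothesis. This gives the thin-set conclusion.

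The main obstacle is the irreducibility of $F$ itself: the argument uses essentially both hypotheses that went into the construction of the pencil, namely the irreducibility of $\tilde{P}_\infty$ and the linear independence of $\tilde{P}_0$ and $\tilde{P}_\infty$.
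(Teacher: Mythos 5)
Your proof is correct and uses the same overall strategy as the paper --- reduce to Hilbert's irreducibility theorem via Serre \S9.2 --- but it makes a choice of affine chart that forces you to do more work. You dehomogenize at $v=1$, so the parameter form is $u^2\tilde P_\infty(1,x)+\tilde P_0(1,x)$, whose specialization at $u=0$ is the \emph{reducible} $\tilde P_0$; this is why you need the multi-paragraph direct factorization argument to establish irreducibility of the bihomogeneous form $F$. The paper instead dehomogenizes at $u=1$, so the parameter form is $\tilde P_\infty + v^2\tilde P_0$ over $k(v)$, and its specialization at $v=0$ is the \emph{irreducible} $\tilde P_\infty$; the standard specialization principle (a nontrivial factorization over $k[v]$ would, since $v\nmid\tilde P_\infty$, specialize to one of $\tilde P_\infty$) then certifies irreducibility in a single sentence, and Serre's \S 9.2 Proposition~1 applies directly. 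In other words, the paper's short proof exploits exactly the asymmetry between $\tilde P_0$ (reducible) and $\tilde P_\infty$ (irreducible) that your bihomogeneous-irreducibility lemma must work harder to encode. Your argument is a legitimate alternative, and the factorization analysis (cases $d_i=0$ and $d_1=d_2=1$, with the contradiction to linear independence) is carried out correctly, as is the handling of the finitely many specializations where $w$ divides the form or the degree drops; but you should be aware that choosing the other chart would have made the irreducibility step immediate.
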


\begin{proof}
We may assume $u=1$.
The degree-$4$ form $\tilde{P}_\infty + v^2 \tilde{P}_0$ 
over $k(v)$ is irreducible
since it has an irreducible specialization, namely $\tilde{P}_\infty$.
Apply~\cite{SerreMordellWeil}*{\S9.2, Proposition~1}.
\end{proof}

\begin{lemma}
\label{L:fibration}
There exists a finite set $S$ of non-complex places of $k$
and a neighborhood $N_v$ of $(0:1)$ in $\PP^1(k_v)$ for each $v \in S$ 
such that for $t \in \PP^1(k)$ belonging to $N_v$ for each $v \in S$,
the fiber $\calV_t$ has a $k_v$-point for {\em every} $v$.
\end{lemma}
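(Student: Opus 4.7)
The plan is to split the argument into two regimes: a finite set $S$ of ``bad'' places handled by a continuity argument, and the complement handled by a direct valuation argument. By Lemma~\ref{L:local points}, the fiber $V = \calV_{(0:1)}$ has a $k_v$-point $P_v$ for every $v$, and since $V$ is smooth over $k$ of the expected relative dimension, the morphism $\calV \to \PP^1$ is smooth at $P_v$.

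For $S$ I would take the archimedean places, the dyadic places, $v_a$, $v_b$, and finitely many more finite places arranged so that for $v \notin S$ one has: (a) $v$ is odd and $v \nmid ab$; (b) the degree-$4$ forms $\tilde{P}_\infty$ and $\tilde{P}_0$ have $v$-integral coefficients and remain linearly independent modulo $v$ (which holds outside a finite set because they are linearly independent over $k$); and (c) $\#\F_v \ge 5$. For each $v \in S$, pick $P_v \in V(k_v)$ and apply the implicit function theorem (archimedean case) or Hensel's lemma in parametric form (nonarchimedean case) to the smooth morphism $\calV \to \PP^1$ at $P_v$: this produces an open neighborhood $N_v \subseteq \PP^1(k_v)$ of $(0:1)$ and a continuous section of $\calV \to \PP^1$ over $N_v$, so $\calV_t(k_v) \ne \emptyset$ for every $t \in N_v$.

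For $v \notin S$ and any $t = (u:w) \in \PP^1(k)$, I would argue directly. After scaling $(u,w)$ so that $\min(v(u),v(w)) = 0$, the reduction $\bar u^2 \tilde{P}_\infty(1,x) + \bar w^2 \tilde{P}_0(1,x) \in \F_v[x]$ is a nonzero polynomial of degree at most $4$ by condition (b). Since $\#\F_v \ge 5$, some $x_0 \in \F_v$ avoids its roots; lifting to $\tilde{x}_0 \in \OO_{k,v}$, the value $r := u^2 \tilde{P}_\infty(1,\tilde{x}_0) + w^2 \tilde{P}_0(1,\tilde{x}_0)$ is a $v$-unit, so $v(r) = 0$. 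Since $v$ is odd and $ab$ is a $v$-unit, either $ab \in k_v^{\times 2}$ (in which case $y^2 - abz^2$ represents every nonzero element of $k_v$), or $k_v(\sqrt{ab})/k_v$ is an unramified quadratic extension (in which case $r \in k_v^\times$ is a norm iff $v(r)$ is even, which it is). Either way, the equation $y^2 - abz^2 = r$ is solvable over $k_v$ at $x = \tilde{x}_0$, yielding $\calV_t(k_v) \ne \emptyset$.

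The main obstacle is choosing $S$ so that the valuation argument for $v \notin S$ is uniform in $t$; the key geometric input is that the $k$-linear independence of $\tilde{P}_\infty$ and $\tilde{P}_0$ persists modulo all but finitely many $v$, ensuring the reduced polynomial in $x$ is never identically zero.
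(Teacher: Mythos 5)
Your proposal is correct, and it diverges from the paper's proof precisely in how the "almost all $v$" part is handled. The paper follows the general fibration method: it spreads $\calV \to \PP^1$ out to a model over a ring of $S$-integers with all geometric fibers integral, and then invokes the Weil conjectures (Lang--Weil estimates) to guarantee that for nonarchimedean $v \notin S$ every $\F_v$-fiber of every $\F_v$-point of the base has a smooth $\F_v$-point, which lifts by Hensel. Your argument instead exploits the explicit conic-bundle structure: after arranging that $ab$ is a $v$-unit and that $\tilde P_\infty, \tilde P_0$ stay $v$-integral and linearly independent mod $v$, you observe that $\bar u^2\tilde P_\infty + \bar v^2\tilde P_0$ reduces to a nonzero polynomial of degree at most $4$, so for $\#\F_v \ge 5$ you can choose $x_0$ making the right-hand side a unit, and then the norm form $y^2 - abz^2$ (diagonal, with unit discriminant, at an odd $v$) represents any unit because the quadratic extension $k_v(\sqrt{ab})$ is split or unramified. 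Both proofs treat the finite set $S$ the same way, via the implicit function theorem at the known $k_v$-point above $(0:1)$. Your version is more elementary and self-contained (no Weil conjectures, no spreading-out argument beyond checking a few finite conditions on $v$), at the cost of being specific to this explicit family; the paper's fibration-method phrasing applies to any proper family with geometrically integral fibers and is thus the "standard" tool, but uses heavier machinery. Both are valid proofs of the lemma.
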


\begin{proof}
This is an application of the ``fibration method'',
which has been used previously in various places
(e.g., \cite{CT-Sansuc-SD1987I}, \cite{Colliot-Thelene1998}*{2.1}, 
\cite{CT-Poonen2000}*{Lemma~3.1}).
Since all geometric fibers of the $k$-morphism $\calV \to \PP^1$ are integral,
the same is true for a model over some ring $\OO_{k,S}$ of $S$-integers.
By adding finitely many $v$ to $S$, 
we can arrange that for nonarchimedean $v \notin S$
the residue field $\F_v$ is large enough 
that every $\F_v$-fiber has a smooth $\F_v$-point by the Weil conjectures;
then by Hensel's lemma any $k_v$-fiber has a $k_v$-point.
Include the real places in $S$,
and exclude the complex places
since for complex $v$ the existence of $k_v$-points on fibers is automatic.
For $v \in S$,
since the fiber above $(0:1)$ has a $k_v$-point,
and since $\calV \to \PP^1$ is smooth above $(0:1)$,
the implicit function theorem implies that 
the image of $\calV(k_v) \to \PP^1(k_v)$
contains a $v$-adic neighborhood $N_v$ of $(0:1)$ in $\PP^1(k_v)$.
\end{proof}

\section{Base change}
\label{S:base change}

The following lemma combines the idea of \cite{CT-Poonen2000}*{Lemma~3.3}
with some new ideas.

\begin{lemma}
\label{L:base change}
Let $P \in \PP^1(k)$.
Let $S$ be a finite set of non-complex places of $k$.
For each $v \in S$, let $N_v$ be a neighborhood of $P$ in $\PP^1(k_v)$.
Let $T$ be a thin subset of $\PP^1(k)$ containing $P$.
Then there exists a $k$-morphism $\gamma\colon \PP^1 \to \PP^1$
such that both of the following hold:
\begin{enumerate}
\item $\gamma(\PP^1(k_v)) \subseteq N_v$ for each $v \in S$.
\item $\gamma^{-1}(T) \intersect \PP^1(k)$
consists of a single point $Q$ with $\gamma(Q)=P$.
\end{enumerate}
\end{lemma}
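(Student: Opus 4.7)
The plan is to construct $\gamma$ as a polynomial map (so that $\gamma^{-1}(\infty) = \{\infty\}$, forcing $Q = \infty$ after normalizing $P = \infty$) within a multi-parameter algebraic family, then to select the parameter via strong approximation (for condition~(1)) and Hilbert's irreducibility theorem (for condition~(2)).

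After a $k$-automorphism of $\PP^1$ I may assume $P = \infty$.  Take as a backbone the polynomial $\gamma_0(t) := c \cdot q(t)^n$, where $q(t) \in k[t]$ is irreducible of degree $2$ with no roots in any $k_v$ for $v \in S$; such a $q$ exists by weak approximation (for example, $q(t) = t^2 + \alpha$ with $\alpha \gg 0$ and a non-square unit at the non-archimedean $v \in S$).  Since $\PP^1(k_v) \setminus \{\infty\}$ is $v$-adically compact and $q$ has no $k_v$-zero, $|q(t)|_v$ attains a positive minimum $m_v$ on $k_v$, whence $|\gamma_0(t)|_v \geq |c|_v \, m_v^n$; choosing $c \in k^\times$ with $|c|_v$ large enough for each $v \in S$ (again by weak approximation) puts $\gamma_0(k_v) \subseteq N_v$, and since $\gamma_0(\infty) = \infty \in N_v$ as well, condition~(1) holds for $\gamma_0$.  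Embedding $\gamma_0$ into a family $\gamma_{\vec\lambda}(t) := \gamma_0(t) + \sum_{i=1}^N \lambda_i w_i(t)$ for auxiliary polynomials $w_1, \ldots, w_N \in k[t]$ of degree less than $2n$, the estimate persists for $|\lambda_i|_v$ sufficiently small at each $v \in S$; this yields a nonempty $v$-adic open set $\Omega \subseteq \Aff^N(k)$ of parameters at which~(1) still holds.

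Condition~(2) reduces to showing $\gamma_{\vec\lambda}(R) \notin T$ for every $R \in k$, since $Q = \infty$ is already the unique rational preimage of $\infty \in T$.  Decompose the thin set as $T = T_0 \cup \bigcup_{i=1}^m \pi_i(D_i(k))$ with $T_0 \subseteq \PP^1(k)$ finite and each $\pi_i \colon D_i \to \PP^1$ an irreducible $k$-cover of degree $\geq 2$.  For each $t_0 \in T_0 \cap k$, the polynomial $\gamma_{\vec\lambda}(R) - t_0 \in k(\vec\lambda)[R]$ is irreducible over $k(\vec\lambda)$ (for sufficiently generic $w_i$), so Hilbert's irreducibility theorem implies that the set of $\vec\lambda_0 \in k^N$ for which $\gamma_{\vec\lambda_0}(R) - t_0$ acquires a $k$-rational root is thin in $\Aff^N(k)$.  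For each type-$2$ component $\pi_i$, form the incidence variety
\[
\calE_i := \{(R, d, \vec\lambda) \in \Aff^1_R \times D_i \times \Aff^N_{\vec\lambda} : \gamma_{\vec\lambda}(R) = \pi_i(d)\};
\]
Hilbert's irreducibility theorem applied to the degree-$\geq 2$ cover $\calE_i \to \Aff^1_R \times \Aff^N_{\vec\lambda}$ (whose generic fiber is irreducible for suitably generic $w_i$, by a Galois-theoretic argument) shows that the set of $(R_0, \vec\lambda_0) \in k^{1+N}$ at which $\pi_i^{-1}(\gamma_{\vec\lambda_0}(R_0)) \cap D_i(k)$ is nonempty is thin in $\Aff^{1+N}(k)$.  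A refinement --- the technical crux --- shows that the projection of this thin set to $\Aff^N(k)$ is itself thin.

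Finally, the union $T_{\text{bad}}$ of the (finitely many) thin bad sets constructed above is itself thin.  Since thin subsets of $\Aff^N(k)$ cannot contain any nonempty $v$-adic open set for $v$ non-complex (the standard strengthening of Hilbert's irreducibility theorem for number fields), $\Omega \setminus T_{\text{bad}} \ne \emptyset$, and any $\vec\lambda$ in it yields $\gamma := \gamma_{\vec\lambda}$ satisfying both (1) and (2).  The main obstacle is the ``technical crux'' mentioned above: ensuring that, for each type-$2$ component $\pi_i$, the projection of the bad thin set in $\Aff^{1+N}(k)$ down to $\Aff^N(k)$ is still thin --- equivalently, that the incidence variety $\calE_i$ has no ``generic'' $k(\vec\lambda)$-rational section besides the one above $Q$.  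This is presumably where the ``new ideas'' complementing \cite{CT-Poonen2000}*{Lemma~3.3} come into play, and requires careful selection of the auxiliary polynomials $w_i$ together with a monodromy computation for each $\pi_i$.
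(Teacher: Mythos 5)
Your proposal has a genuine gap, and you have correctly identified where it lies: the claim that the projection to $\Aff^N(k)$ of the thin set in $\Aff^{1+N}(k)$ is itself thin. This is in fact false in general --- projections of thin sets can be far from thin (the set of $y\in\Q$ that are sums of two rational squares, which has positive density, is the projection to the $y$-axis of the type-2 thin set $\{(x,y)\in\Aff^2(\Q):\ y-x^2\in\Q^{\times 2}\}$). The reason your argument cannot work as stated is that condition~(2) is \emph{not} a ``generic'' condition on $\vec\lambda$ in the sense captured by Hilbert irreducibility: for a general thin $T$ (say, the rational squares), both $T$ and $\gamma_{\vec\lambda}(k)$ are infinite, and forcing an infinite set to dodge another infinite set is an extra commitment that no amount of genericity in the coefficients $\vec\lambda$ gives you. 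No choice of auxiliary $w_i$ and no monodromy computation will fix this as long as $T$ stays infinite.

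The missing idea, which is the heart of the paper's proof, is a preliminary reduction that shrinks $T$ to a \emph{finite} set before the local and quadratic-twist tricks are applied. Concretely: write $T\subseteq\bigcup_i\nu_i(C_i(k))$ with $\deg\nu_i\ge 2$; choose an auxiliary point $Q$ not a branch point; take $\beta\colon\PP^1\to\PP^1$ of large degree $n$, totally ramified exactly over $P$ and $Q$; and form the fiber product curves $C_i'=C_i\times_{\PP^1}\PP^1$. Total ramification at $Q$ keeps $C_i'$ geometrically integral, and each branch point $R\ne P,Q$ of $\nu_i$ contributes $n$ branch points of $\nu_i'$, so the Hurwitz formula forces the genus of $C_i'$ above $1$ for $n$ large. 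Faltings' theorem then makes each $C_i'(k)$ --- hence $\beta^{-1}(T)$ --- finite, and pulling all the data back along $\beta$ reduces to the case of finite $T$. Only after this reduction (and a further reduction to $S=\emptyset$ using explicit maps $t\mapsto 1/(t^2+m)$ and $t\mapsto g(t^{q^r(q-1)})$ for each $v\in S$) does the quadratic-twist map $\gamma(t)=ct^2$ with a well-chosen $c\in k^\times$ finish the proof, the point being that $ck^{\times 2}$ can avoid a \emph{finite} set in $k^\times/k^{\times 2}$ but has no hope of avoiding an infinite thin set. Your local analysis in step~(1) is sound and essentially parallels the paper's $S$-reduction in spirit, but without the Faltings reduction the global condition~(2) cannot be achieved by the route you take.
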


\begin{proof}
We will construct $\gamma$ as a composition.
But we present the argument as a series of reductions,
each step of which involves taking the inverse image of all the data
under some $\beta \colon \PP^1 \to \PP^1$
and replacing $P$ by some $P' \in \beta^{-1}(P) \intersect \PP^1(k)$.

By definition of thin, 
there exist finitely many regular projective geometrically integral curves 
$C_i$ and morphisms $\nu_i\colon C_i \to \PP^1$ of degree greater than $1$
such that $T \subseteq \Union \nu_i(C_i(k))$.
Choose $Q \in \PP^1(k)$ not equal to a branch point of any $\nu_i$.
Let $\beta\colon \PP^1 \to \PP^1$ be a morphism 
of some large degree $n$ 
such that $\beta$ is totally ramified above $P$ and $Q$
and unramified elsewhere.
Define $\nu_i'\colon C_i' \to \PP^1$ to make the diagram
\[
\xymatrix{
C_i' \ar[d]_{\nu_i'} \ar[r] & C_i \ar[d]^{\nu_i} \\
\PP^1 \ar[r]^{\beta} & \PP^1 \\
}
\]
cartesian.
Since $C_i' \to C_i$ is totally ramified above $\nu_i^{-1}(Q)$,
each $C_i'$ is geometrically integral.
The morphism $C_i \to \PP^1$ must have a branch point $R \in \PP^1(\kbar)$
other than $P$,
and by choice of $Q$ we have $R \ne Q$,
so $\beta^{-1}(R)$ gives $n$ branch points of $\nu_i'\colon C_i' \to \PP^1$.
On the other hand, $\deg \nu_i' = \deg \nu_i$,
so if $n$ is sufficiently large,
the Hurwitz formula implies that the normalization of $C_i'$
has genus greater than $1$.
By Faltings' theorem~\cite{Faltings1983}, $C_i'(k)$ is finite.
We have $\beta^{-1}(T) \subseteq \Union \nu_i'(C_i'(k))$,
so $\beta^{-1}(T)$ is finite.
By pulling all the data back under $\beta$,
we reduce to the case where $T$ is finite.

We may assume that $P=0 \in \PP^1(k)$ with respect to some coordinate.
Then the rational function $t \mapsto 1/(t^2+m)$ maps $\infty$ to $0$
and maps $\PP^1(\R)$ into $N_v$ for each real $v$
if $m \in \Z_{>0}$ is chosen large enough.
Pulling all the data back under the corresponding automorphism of $\PP^1$,
we reduce to the case where $S$ contains no archimedean places.
Similarly, for each nonarchimedean $v \in S$, let $q=\#\F_v$,
choose a rational function $g$ mapping $\{0,1,\infty\}$ to $P$,
and pull back everything under the rational function 
$g(t^m)$ where $m=q^r(q-1)$ for some large $r$;
this lets us replace $S$ by $S-\{v\}$.
Eventually we reduce to the case in which $S=\emptyset$.

For a suitable choice of coordinate, 
$P$ is the point $0 \in \PP^1(k)$, and $\infty \notin T$.
Choose $c \in k^\times$ such that the images of $c$ and $T-\{0\}$
in $k^\times/k^{\times 2}$ do not meet.
Let $\gamma \colon \PP^1 \to \PP^1$ be given by the rational function $ct^2$.
Then $\gamma^{-1}(T) \intersect \PP^1(k)$ consists of the single point $0$.
\end{proof}

\begin{proposition}
\label{P:Chatelet bundle}
There exists a Ch\^atelet surface bundle $\mu\colon \calW \to \PP^1$ over $k$
such that 
\begin{enumerate}
\item[(i)] $\mu$ is smooth over $\PP^1(k)$, and
\item[(ii)] $\mu(\calW(k))=\Aff^1(k)$.
\end{enumerate}
\end{proposition}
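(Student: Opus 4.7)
The plan is to apply Lemma~\ref{L:base change} to the Ch\^atelet surface bundle $\calV \to \PP^1$ of Section~\ref{S:Chatelet bundles}, choosing a pullback morphism $\gamma\colon\PP^1\to\PP^1$ that concentrates the Hasse-principle-violating fiber $\calV_{(0:1)}\isom V$ above a single $k$-rational point, which a linear coordinate change on the base then moves to $\infty$.

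To set up, I would take $P=(0:1)\in\PP^1(k)$ and invoke Lemma~\ref{L:fibration} to produce a finite set $S$ of non-complex places and neighborhoods $\{N_v\}_{v\in S}$ of $P$ such that every $k$-fiber of $\calV$ lying over all the $N_v$ is everywhere locally solvable. I would then refine the choice of $\tilde{P}_\infty$ so that it is irreducible not just over $k$ but over $k(\sqrt{ab})$ as well (easily arranged by a Chebotarev argument), and build $T\subseteq\PP^1(k)$ as the union of $\{P\}$, the thin set of Lemma~\ref{L:thin set}, its analogue for reducibility of $u^2\tilde{P}_\infty+v^2\tilde{P}_0$ over $k(\sqrt{ab})$ (still thin by Hilbert irreducibility over that extension), and the finite set of $k$-points above which $\calV \to \PP^1$ fails to be smooth.

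Feeding $(P,S,\{N_v\},T)$ to Lemma~\ref{L:base change} yields $\gamma\colon\PP^1\to\PP^1$ with $\gamma(\PP^1(k_v))\subseteq N_v$ for each $v\in S$ and $\gamma^{-1}(T)\intersect\PP^1(k)=\{Q\}$ satisfying $\gamma(Q)=P$. Set $\calW:=\calV\times_{\PP^1,\gamma}\PP^1$ and post-compose with a $k$-automorphism of the base carrying $Q$ to $\infty$, denoting the resulting map $\mu\colon\calW\to\PP^1$. Then $\calW$ is a Ch\^atelet surface bundle, $\mu^{-1}(\infty)\isom V$ is smooth and has no $k$-point, and for each $t\in\Aff^1(k)$ the fiber $\mu^{-1}(t)\isom\calV_{\gamma(t)}$ is a smooth Ch\^atelet surface (since $\gamma(t)\notin T$) with $k_v$-points for every place $v$ by Lemma~\ref{L:fibration}.

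The one non-formal step, and what I expect to be the main obstacle, is deducing a $k$-point on each $\mu^{-1}(t)$ with $t\in\Aff^1(k)$ from everywhere-local solvability. For this I would invoke the main theorem of \cites{CT-Sansuc-SD1987I,CT-Sansuc-SD1987II}, that the Brauer-Manin obstruction is the only obstruction to the Hasse principle for Ch\^atelet surfaces over number fields, together with the fact that this obstruction vanishes whenever the defining polynomial is irreducible over $k(\sqrt{ab})$. The latter vanishing is precisely what placing both reducibility loci inside $T$ was arranged to guarantee. Once it is verified by the standard Picard-lattice and Galois-cohomology calculation for Ch\^atelet surfaces, one concludes $\mu(\calW(k))=\Aff^1(k)$.
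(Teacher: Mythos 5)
Your proposal follows the same overall architecture as the paper's proof: pull back the bundle $\calV \to \PP^1$ of Section~\ref{S:Chatelet bundles} along a $\gamma$ obtained from Lemma~\ref{L:base change}, fed with $P=(0:1)$, the local data $(S,\{N_v\})$ from Lemma~\ref{L:fibration}, and a thin set $T$, then move the bad fiber to $\infty$. The one genuine departure is in the final Hasse-principle step. The paper invokes \cite{CT-Sansuc-SD1987I}*{Theorem~B(i)(b)} directly, which asserts the Hasse principle for a Ch\^atelet surface whose degree-$4$ polynomial is irreducible \emph{over $k$}; consequently it only needs the thin set of Lemma~\ref{L:thin set} as stated (and notes that this thin set already contains the non-smooth locus, since non-smoothness forces a repeated factor and hence reducibility, so your explicit inclusion of that locus is redundant though harmless). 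You instead enlarge $T$ to force irreducibility over $k(\sqrt{ab})$, which kills $\Br \calW_t / \Br k$, and then appeal to the general ``Brauer--Manin is the only obstruction'' theorem. This works, but it costs you two extra verifications: (1) that the set of $(u{:}v)\in\PP^1(k)$ with $u^2\tilde P_\infty + v^2\tilde P_0$ reducible over $k(\sqrt{ab})$ is thin in $\PP^1(k)$ --- true, but you need the Galois-group form of Hilbert irreducibility (Serre \S9.2), not merely irreducibility over $k$; and (2) the standard Brauer-group computation showing triviality under irreducibility over $k(\sqrt{ab})$. The paper's route is shorter because Theorem~B(i)(b) already packages the nontrivial content (the Hasse principle can hold even when the Brauer group mod constants is nonzero, provided the polynomial is $k$-irreducible), so no Brauer-group calculation and no enlarged thin set are needed. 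Both arguments are correct.
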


\begin{proof}
Obtain $\gamma \colon \PP^1 \to \PP^1$ from Lemma~\ref{L:base change} with 
$P=(0:1)$, with $S$ and $N_v$ as in Lemma~\ref{L:fibration},
with $T$ the thin set of Lemma~\ref{L:thin set};
note that $T$ contains the finitely many $t \in \PP^1(k)$ 
above which $\calV \to \PP^1$ is not smooth.
We may assume that the $Q$ in Lemma~\ref{L:base change} is $\infty$.
Define $\calW$ as the fiber product
\[
\xymatrix{
\calW \ar[d]_\mu \ar[r] & \calV \ar[d] \\
\PP^1 \ar[r]^\gamma & \PP^1. \\
}
\]
Then $\mu$ is smooth above $\PP^1(k)$,
and for every $t \in \PP^1(k)$
the fiber $\calW_t$ has a $k_v$-point for every $v$.
If $t \in \Aff^1(k)$, then $\gamma(t) \notin T$,
so $\calW_t$ is a Ch\^atelet surface defined by an irreducible 
degree-$4$ polynomial, so by \cite{CT-Sansuc-SD1987I}*{Theorem~B(i)(b)} 
$\calW_t$ satisfies the Hasse principle;
thus $\calW_t$ has a $k$-point.
But if $t=\infty$, then $\calW_t$ is isomorphic to $\calV_{(0:1)} \isom V$, 
which has no $k$-point.
\end{proof}

The following proposition will not be needed elsewhere.
Its role is only to illustrate that 
Theorem~\ref{T:cover} and Proposition~\ref{P:Chatelet bundle} 
depend subtly upon properties of $k$: for instance,
they are not true over all fields of cohomological dimension $2$.

\begin{proposition}
\label{P:function fields over C}
Let $k_0$ be an uncountable algebraically closed field,
and let $k$ be a field extension of $k_0$ generated by a set $S$
of cardinality less than $\#k_0$.
Then there is no morphism $\pi\colon \calW \to \PP^1$
of projective $k$-varieties such that $\pi(\calW(k))=\Aff^1(k)$.
\end{proposition}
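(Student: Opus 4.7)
The plan is to argue by contradiction. Suppose such a projective morphism $\pi\colon\calW\to\PP^1$ exists. Non-dominant irreducible components of $\calW$ contribute only finitely many points to $\pi(\calW(k))$, so after replacing $\calW$ by an irreducible component dominating $\PP^1$ we may assume $\calW$ is integral and $\pi$ surjective, while $\pi(\calW(k))$ still contains $k_0\setminus F$ for some finite $F\subset k$ and $\calW_\infty(k)=\emptyset$. The goal is to produce a $k$-point of $\calW_\infty$ and derive a contradiction.

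I would first spread the family out to $k_0$. Choose finite $S_1\subseteq S$ so that $(\calW,\pi)$ is defined over $k_1=k_0(S_1)=k_0(V)$ for an integral affine $k_0$-variety $V$, and extend $\pi$ to a surjective projective morphism $\tilde\pi\colon\tilde\calW\to\PP^1\times V$ of integral $k_0$-varieties. For $v$ in a dense open of $V(k_0)$ the fiber $\tilde\calW_v\to\PP^1_{k_0}$ is surjective, and since $k_0$ is algebraically closed it is surjective on $k_0$-points, in particular nonempty above $\infty$. One cannot in general promote such pointwise existence to a rational $k_0$-section (the example $y^2=t$ has $k_0$-points in every fiber but no rational section over $k_0$), so the hypothesis must be used further.

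To exploit it, for each $a\in k_0\setminus F$ pick $w_a\in\calW(k)$ with $\pi(w_a)=a$. The coordinates of all the $w_a$, together with $S_1$, use a subset $T\subseteq S$ of cardinality at most $|S|<|k_0|$. Setting $V_T=\Aff^{|T|}_{k_0}$ (so $k_0(V_T)=k_0(T)=:k_T$), each $w_a$ becomes a rational $k_0$-section $\sigma_a\colon V_T\dashrightarrow\tilde\calW_T$ of the pullback family, composing with $\tilde\pi_T$ to the constant $\PP^1$-value $a\in\PP^1(k_0)$. We thus obtain an uncountable family of rational $k_0$-sections whose constant $\PP^1$-values exhaust $k_0\setminus F$.

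The key final step is to pass to the limit $a\to\infty$ via a Hilbert-scheme argument. Compactify $\tilde\calW_T$ to a projective $k_0$-variety $\overline{\tilde\calW_T}$, and encode each $\sigma_a$ by its graph-closure, a closed subscheme with some Hilbert polynomial $P_a$. Since integer Hilbert polynomials form a countable set but $k_0\setminus F$ is uncountable, pigeonhole yields one $P_0$ realized by uncountably many $\sigma_a$, producing that many $k_0$-points of the projective component $\operatorname{Hilb}^{P_0}$. The locus $\Xi\subseteq\operatorname{Hilb}^{P_0}$ of graph-closures of rational sections of $\tilde\calW_T\to V_T$ with constant $\PP^1$-value is locally closed and carries a natural evaluation morphism $\Xi\to\PP^1$. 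The image of $\Xi(k_0)\to\PP^1(k_0)$ is constructible by Chevalley and uncountable, hence cofinite in $\PP^1$. The condition ``the universal subscheme maps to a single point of $\PP^1$'' is closed on $\operatorname{Hilb}^{P_0}$, so the evaluation extends to the projective closure $\bar\Xi\subseteq\operatorname{Hilb}^{P_0}$; its image is now closed and cofinite, so equals $\PP^1$, and some $[Z]\in\bar\Xi(k_0)$ maps to $\infty$. The subscheme $Z$ lies in the $\infty$-fiber of $\overline{\tilde\calW_T}\to\PP^1$; by deformation-invariance of pushforward cycles to $\bar V_T$ it contains a unique irreducible component projecting birationally to the base, which is the graph-closure of a rational $k_0$-section $\sigma_\infty\colon V_T\dashrightarrow\tilde\calW_T$ with constant $\PP^1$-value $\infty$, producing the sought $k_T$-point, and hence $k$-point, of $\calW_\infty$. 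The main obstacle is this final Hilbert-scheme step: building a suitable compactification, verifying that $\Xi$ is locally closed with an extendable evaluation morphism, and isolating the birational ``principal component'' of $Z$ while ruling out complete degeneration into vertical pieces.
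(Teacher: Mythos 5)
Your plan has a genuine gap at the heart of the Hilbert-scheme step, and that gap cannot be repaired: the argument is caught in a dilemma. To run the pigeonhole-then-degenerate step you need (a) a fixed projective $k_0$-scheme $\overline{\tilde\calW_T}$ whose Hilbert scheme you invoke, which requires $T$ to be \emph{finite}, and (b) an \emph{uncountable} set of sections $\sigma_a$ to pigeonhole, which requires uncountably many of the $w_a$ to have coordinates in $k_0(T)$. These two requirements are incompatible. If $T$ is infinite, then $V_T$ is not a finite-type $k_0$-scheme, $\overline{\tilde\calW_T}$ has no projective compactification, and there is no Hilbert scheme to speak of. If $T$ is finite, then the set of $a\in\PP^1(k_0)$ arising as the $\PP^1$-value of a section defined over $k_0(T)$ is the image in $\PP^1_{k_0}$ of a \emph{projective} $k_0$-scheme (essentially the restriction-of-scalars of $\calW$ to the finite-dimensional $k_0$-subspace of $k$ spanned by the relevant monomials), hence Zariski closed; since it omits $\infty$ it is a proper closed subset of $\PP^1_{k_0}$, hence \emph{finite}, not uncountable, and your pigeonhole on Hilbert polynomials has nothing to bite on. In other words, the same ``image of a projective variety is closed'' principle that powers your $\Xi$ step, applied one level earlier, shows that each finite-$T$ slice contributes only finitely many values of $a$, so the desired uncountable family with a common Hilbert polynomial never materialises, and no limit point over $\infty$ exists in $\bar\Xi$.

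The paper's proof is precisely this observation, organised as a cardinality count rather than a degeneration. For each finite-dimensional $k_0$-subspace $L\subseteq k$ spanned by finitely many products of elements of $S$, the set $\pi(\calW(L))\cap\PP^1(k_0)$ is Zariski closed in $\PP^1_{k_0}$ and avoids $\infty$, hence is finite; there are at most $\max\{\#S,\aleph_0\}<\#k_0$ such $L$, so the union has cardinality less than $\#k_0$ and cannot cover $\Aff^1(k_0)$. Your preliminary reductions (discarding non-dominant components, spreading out over $k_0$, the correct remark that $y^2=t$ blocks the naive pointwise-to-section argument) are sound, but the degeneration strategy that follows is structurally doomed: since $\calW_\infty(k)=\emptyset$, any attempt to produce a rational section over $\infty$ must fail, and here it fails because the sections you feed into the Hilbert scheme already sit over only a finite subset of $\Aff^1$.
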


\begin{proof}
Suppose that $\pi(\calW(k))=\Aff^1(k)$.
Fix a projective embedding $\calW \injects \PP^n_k$.
For each $k_0$-subspace $L$ of $k$ spanned by finitely many finite products
of elements of $S$, let $\calW(L)$ be the set of points 
in $\calW(k) \subseteq \PP^n(k)$
represented by an $(n+1)$-tuple of elements of $L$, not all zero.
The set of points in $\PP^1(k_0) \subseteq \PP^1(k)$
in the image of $\calW(L) \to \PP^1(k)$
is a Zariski closed subset of $\PP^1(k_0)$ not containing $\infty$,
and hence finite.
Taking the union over all $L$, 
we find that the set of points in $\PP^1(k_0)$
in the image of $\calW(k) \to \PP^1(k)$
has cardinality at most $\max\{\#S,\aleph_0\}$,
which is less than $\#k_0$.
In particular, $\pi(\calW(k))$ cannot be a cofinite subset of $\PP^1(k)$.
\end{proof}

\section{Reductions}
\label{S:reductions}

\begin{lemma}
\label{L:P^n}
There exists a projective $k$-variety $Z$ 
and a morphism $\eta\colon Z \to \PP^n$
such that $\eta(Z(k))=\Aff^n(k)$ and $\eta$ is smooth above $\Aff^n(k)$.
\end{lemma}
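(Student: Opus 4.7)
The plan is to reduce to the one-variable case of Proposition~\ref{P:Chatelet bundle} by viewing $\Aff^n \subset \PP^n$ simultaneously in $n$ independent one-parameter directions. First, consider the rational map
\[
\pi\colon \PP^n \dashrightarrow (\PP^1)^n, \qquad [x_0:\cdots:x_n] \mapsto \bigl([x_0:x_1],\ldots,[x_0:x_n]\bigr),
\]
which restricts to an isomorphism from $\Aff^n = \{x_0 \ne 0\}$ onto $(\Aff^1)^n \subset (\PP^1)^n$. Let $\widetilde{\PP^n} \subset \PP^n \times (\PP^1)^n$ be the closure of the graph of $\pi$; its two projections give a birational morphism $p_1\colon \widetilde{\PP^n} \to \PP^n$ (an isomorphism above $\Aff^n$) and a morphism $p_2\colon \widetilde{\PP^n} \to (\PP^1)^n$. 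With $\mu\colon \calW \to \PP^1$ as in Proposition~\ref{P:Chatelet bundle}, set
\[
Z := \widetilde{\PP^n} \times_{(\PP^1)^n} \calW^n \qquad\text{and}\qquad \eta := p_1 \circ \operatorname{pr}_{\widetilde{\PP^n}}\colon Z \to \PP^n.
\]
Then $Z$ is projective, being a closed subscheme of the projective $k$-variety $\widetilde{\PP^n} \times_k \calW^n$.

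Next I would verify $\eta(Z(k)) = \Aff^n(k)$. For the inclusion $\supseteq$: given $t = (t_1,\ldots,t_n) \in \Aff^n(k)$, its unique lift to $\widetilde{\PP^n}$ maps under $p_2$ to $([1:t_1],\ldots,[1:t_n])$; by Proposition~\ref{P:Chatelet bundle}(ii) each $[1:t_i]$ lies in $\mu(\calW(k))$, so choosing $k$-preimages componentwise yields a $k$-point of $Z$ lying over $t$. For $\subseteq$: suppose $(x,w) \in Z(k)$ maps to $\bar x = [0:a_1:\cdots:a_n] \in \PP^n(k)$, so some $a_j \ne 0$. The equations defining $\widetilde{\PP^n}$ in $\PP^n \times (\PP^1)^n$ force the $j$-th component of $p_2(x)$ to be $[0:1] = \infty \in \PP^1(k)$; then the $j$-th factor of $w \in \calW^n(k)$ would be a $k$-point of $\calW_\infty$, contradicting $\infty \notin \mu(\calW(k))$.

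For smoothness: $p_1$ is an isomorphism over $\Aff^n$, so $\eta^{-1}(\Aff^n) \to \Aff^n$ is the base change of $\mu^n\colon \calW^n \to (\PP^1)^n$ along the open immersion $\Aff^n \hookrightarrow (\Aff^1)^n \subset (\PP^1)^n$; by Proposition~\ref{P:Chatelet bundle}(i) there is a Zariski-open $U \subseteq \PP^1$ with $\PP^1(k) \subseteq U$ over which $\mu$ is smooth, whence $\mu^n$ is smooth over $U^n \supseteq \Aff^1(k)^n$, and therefore $\eta$ is smooth above $\Aff^n(k)$. The main subtlety is that the componentwise projection to $\PP^1$ is only a rational map; resolving the indeterminacy via the graph closure, rather than via a more elaborate blow-up, is precisely what keeps $p_1$ an isomorphism on $\Aff^n$ and simultaneously rules out $k$-points over the hyperplane at infinity.
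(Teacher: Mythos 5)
Your proof is correct and takes essentially the same route as the paper: both resolve the birational correspondence between $\PP^n$ and $(\PP^1)^n$ (the paper abstractly, you via the explicit graph closure $\widetilde{\PP^n}$, which is one valid choice for the paper's $J$) and then pull back $\calW^n \to (\PP^1)^n$ along the resolution, with the same argument for why no $k$-point of $Z$ can land over the hyperplane at infinity. Your verification via the bihomogeneous equations $x_0 t_i = x_i s_i$ is a slightly more explicit version of the paper's observation that $\mu^n(\calW^n(k)) = (\Aff^1)^n(k)$ forces the image in $J(k)$ into the copy of $\Aff^n$.
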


\begin{proof}
Start with the birational map $(\PP^1)^n \dashrightarrow \PP^n$ 
given by the isomorphism $(\Aff^1)^n \to \Aff^n$.
Resolve the indeterminacy; i.e., find
a projective $k$-variety $J$ and a birational morphism $J \to (\PP^1)^n$ 
whose composition with $(\PP^1)^n \dashrightarrow \PP^n$ extends
to a morphism $J \to \PP^n$ that is an isomorphism above $\Aff^n$.
Define $Z$ to make a cartesian square
\[
\xymatrix{
Z \ar[d] \ar[r] & \calW^n \ar[d]^{\mu^n} \\
J \ar[r] \ar@/_1pc/[rr] & (\PP^1)^n \ar@{-->}[r] & \PP^n \\
}
\]
where $\calW \stackrel{\mu}\to \PP^1$ 
is as in Proposition~\ref{P:Chatelet bundle}.
Let $\eta$ be the composition $Z \to J \to \PP^n$.

By construction of $\calW$, we have $\mu^n(\calW^n(k))=(\Aff^1)^n(k)$,
so the image of $Z(k) \to J(k)$ is contained in the copy of $\Aff^n$ in $J$.
Therefore $\eta(Z(k)) \subseteq \Aff^n(k)$.

On the other hand, if $t \in \Aff^n(k)$,
then $J \to (\PP^1)^n$ is a local isomorphism above $t$,
and $\calW^n \to (\PP^1)^n$ is smooth above $t$,
so $Z \to J$ is smooth above $t$,
and the fiber $\eta^{-1}(t)$ is isomorphic to the corresponding fiber
of $\calW^n \to (\PP^1)^n$
so it has a $k$-point.
Thus $\eta(Z(k))=\Aff^n(k)$.
\end{proof}

\begin{proof}[Proof of existence in Theorem~\ref{T:cover}]
We use induction on $\dim X$.
We may assume that $X$ is integral
and that $X(k)$ is Zariski dense in $X$;
then $X$ is generically smooth,
and the non-smooth locus $X_\sing$ is of lower dimension.
Let $U_\sing = U \intersect X_\sing$.
The inductive hypothesis gives $\pi_1\colon Y_1 \to X_\sing$
such that $\pi_1(Y_1(k))=U_\sing(k)$.
If we prove the conclusion for $U-U_\sing \subseteq X$,
i.e., if we find $\pi_2\colon Y_2 \to X$ 
such that $\pi_2(Y_2(k))=(U-U_\sing)(k)$,
then the disjoint union $Y_1 \coprod Y_2$
serves as a $Y$ for $U \subseteq X$.
Thus we reduce to the case where $U$ is smooth over $k$.

If $U$ is a finite union of open subvarieties $U_i$,
then it suffices to prove the conclusion for each $U_i \subseteq X$
and take the disjoint union of the resulting $Y$'s.
In particular, we may reduce to the case where $U=X-D$
for some very ample effective divisor $D \subseteq X$.
In other words, we may assume that $X \subseteq \PP^n$
and $U = X \intersect \Aff^n$.

Let $Z \to \PP^n$ be as in Lemma~\ref{L:P^n}.
Define $Y_0$ to make a cartesian diagram
\[
\xymatrix{
Y \ar[rd] \ar@/_/@{..>}[rdd]_{\pi} \\
& Y_0 \ar[r] \ar[d] & Z \ar[d]^\eta \\
& X \ar@{^{(}->}[r] & \PP^n \\
U \ar@{^{(}->}[r] \ar@{^{(}->}[ru] & \Aff^n \ar@{^{(}->}[ru] \\
}
\]
and let $Y \to Y_0$ be a resolution of singularities that is an
isomorphism above the smooth locus of $Y_0$,
so $Y$ is a regular projective variety.
Let $\pi$ be the composition $Y \to Y_0 \to X$.

Suppose that $t \in U(k)$.
Then $Z \to \PP^n$ is smooth above $t$, by choice of $Z$.
So $Y_0 \to X$ is smooth above $t$.
Moreover, $U \to \Spec k$ is smooth,
so $Y_0 \to \Spec k$ is smooth above $t$.
Therefore $Y \to Y_0$ is a local isomorphism above $t$.
Thus $\pi^{-1}(t) \isom \eta^{-1}(t)$,
and the latter has a $k$-point.

On the other hand, if $t \in X(k)-U(k)$,
then $\pi^{-1}(t)$ cannot have a $k$-point,
since such a $k$-point would map to a $k$-point of $Z$ lying
over $t \in \PP^n(k)-\Aff^n(k)$, contradicting the choice of $Z$.

Thus $\pi(Y(k))=U(k)$.
\end{proof}

\begin{remark}
\label{R:curve}
In the special case where $X$ is a regular projective curve
and $U$ is an affine open subvariety of $X$, 
the reductions may be simplified greatly.
Namely, using the Riemann-Roch theorem,
construct a morphism $f \colon X \to \PP^1$
such that $f^{-1}(\infty) = X-U$;
now define $Y_0$ to make a cartesian diagram
\[
\xymatrix{
Y \ar[rd] \ar@/_/@{..>}[rdd]_{\pi} \\
& Y_0 \ar[r] \ar[d] & \calW \ar[d]^\mu \\
& X \ar[r]^f & \PP^1 \\
}
\]
and let $Y$ be a resolution of singularities of $Y_0$.
\end{remark}

\section{Effectivity}

The construction of $Y$ in Theorem~\ref{T:cover} as given
is not effective, because it used Faltings' theorem.
More specifically, in the proof of Lemma~\ref{L:base change}
we know that $C_i'(k)$ is finite but might not know what it is,
so when we reach the last paragraph of the proof, 
we might not know what the finite set $T$ is,
and hence we have no algorithm for computing a good $c$,
where \defi{good} means that 
the images of $c$ and $T - \{0\}$ 
in $k^\times/k^{\times 2}$ do not meet.

\begin{proof}[Existence of an algorithm for Theorem~\ref{T:cover}]
Let $F$ be the (finite) set of $t \in \PP^1(k)$ 
such that $\calV_t$ is not smooth.
Suppose that instead of requiring that $c$ be good,
we require only the effectively checkable condition 
that the images of $c$ and $F$ in $k^\times/k^{\times 2}$ do not meet.
Then the proof of existence in Theorem~\ref{T:cover}
still yields a regular projective variety $Y_c$ 
and a morphism $\pi_c \colon Y_c \to X$,
but it might not have the desired property $\pi_c(Y_c(k))=U(k)$.
Indeed, in the proof of Proposition~\ref{P:Chatelet bundle},
some of the Ch\^atelet surfaces $\calW_t$ other than $\calW_\infty$
may be defined by a reducible degree-$4$ polynomial
and hence may violate the Hasse principle;
thus the conclusion $\mu(\calW(k))=\Aff^1(k)$ 
in Proposition~\ref{P:Chatelet bundle}
must be weakened to $\mu(\calW(k)) \subseteq \Aff^1(k)$,
and this eventually implies $\pi_c(Y_c(k)) \subseteq U(k)$.

On the other hand, an argument of Parshin (see~\cite{Szpiro1985})
shows that Faltings' proof of the Mordell conjecture
can be adapted to give an upper bound on
the {\em size} of each set $C_i'(k)$ in the proof of Lemma~\ref{L:base change}.
Therefore we can compute a bound on $\#T$.
Choose a finite subset $\Gamma \subseteq k^\times$ 
whose image in $k^\times/k^{\times 2}$ is disjoint from the image
of $F$ and has size greater than $\#T$.
Then $\Gamma$ contains at least one good $c$.

Let $Y = \coprod_{c \in \Gamma} Y_c$,
and define $\pi \colon Y \to X$ by $\pi|_{Y_c}=\pi_c$.
Then $\pi(Y(k)) = \Union_{c \in \Gamma} \pi_c(Y_c(k)) = U(k)$
since all terms in the union are subsets of $U(k)$
and some term equals $U(k)$.
\end{proof}

\section{Algorithms for rational points}
\label{S:algorithms}

\begin{proof}[Proof of Theorem~\ref{T:algorithms}(i)]
Suppose we want to know whether the $k$-variety $U$ has a $k$-point.
By passing to a finite open cover, we may assume that $U$ is affine.
Let $X$ be a projective closure of $U$.
Construct $Y \to X$ be as in Theorem~\ref{T:cover}.
Let $Y_1,\ldots,Y_n$ be the connected components of $Y$.
If $Y_i$ is geometrically integral, by assumption we can decide whether 
$Y_i$ has a $k$-point.
If $Y_i$ is not geometrically integral, then since $Y_i$ is regular,
it has no $k$-point.
Thus we can decide whether $Y$ has a $k$-point,
and the answer for $U$ is the same.
\end{proof}

\begin{proof}[Proof of Theorem~\ref{T:algorithms}(ii)]
We want to compute $\#X(k)$.
Apply the algorithm of Theorem~\ref{T:algorithms}(i) to $X$.
If it says that $X$ has no $k$-point, we are done.
Otherwise, search until a $k$-point $P$ on $X$ is found,
and start over with the variety $X - \{P\}$.
If $X(k)$ is finite, this algorithm will eventually terminate.
(This kind of argument was used also in~\cite{Kim2003}.)
\end{proof}

\section{Global function fields}\label{S:function fields}

In this section, we investigate whether the proofs of the previous sections 
carry over to the case where $k$ is a global function field
of characteristic not $2$.

The main issues are 
\begin{enumerate}
\item The two-part paper \cites{CT-Sansuc-SD1987I,CT-Sansuc-SD1987II},
which is key to all our main results, works only over number fields.
But it seems likely that the same proofs work, with at most minor
modifications, over any global field of characteristic not $2$.
\item The proof of Theorem~\ref{T:cover} uses resolution of singularities,
which is not proved in positive characteristic.
Moreover, the proof of Theorem~\ref{T:algorithms} uses Theorem~\ref{T:cover}
so it also is in question.
Without assuming resolution of singularities,
one would obtain the weaker versions of 
Theorem \ref{T:algorithms} and~\ref{T:cover}
in which the word ``regular'' is removed from both.
\end{enumerate}

There are a few other issues, but these can be circumvented,
as we now discuss.

The proof of Proposition~\ref{P:Chatelet}
works for any global function field $k$ of characteristic not $2$:
fix a place $\infty$ of $k$, let $\OO_k$ be the ring of functions
that are regular outside $\infty$, and replace the archimedean and $2$-adic
conditions on $a$ and $b$ by the condition that $a$ and $b$ be squares
in the completion $k_\infty$; then the proof proceeds as before.

The second paragraph of the proof of Lemma~\ref{L:base change}
encounters two problems in positive characteristic: 
first, it needs $\nu_i$ to be separable,
and second, to apply the function field analogue~\cite{Samuel1966} 
of Faltings' theorem it needs $C_i'$ to be non-isotrivial.
As for the first problem, if in Section~\ref{S:Chatelet bundles} 
we choose $\tilde{P}_\infty(w,x)$ to be separable,
then the same will be true of $\tilde{P}_\infty + v^2 \tilde{P}_0$
over $k(v)$,
and the same will be true of the $\nu_i$ in the application of 
Lemma~\ref{L:base change}, since the $\nu_i$ correspond to field extensions
of $k(v)$ contained in the splitting field 
of $\tilde{P}_\infty + v^2 \tilde{P}_0$ over $k(v)$.
As for the second problem, 
the flexibility in the choice of $Q$ in the proof of Lemma~\ref{L:base change}
lets us arrange for $C_i'$ to be non-isotrivial.
Moreover, in this case, one can bound not only the number of $k$-points
on each $C_i'$, but also their height \cite{Szpiro1981}*{\S8, Corollaire~2}.

There is another thing that is better over global function fields $k$
than over number fields.
Namely, by a proved generalization of Hilbert's tenth problem to such $k$
\cites{Pheidas1991,Shlapentokh1992functionfield,Videla1994,Eisentraeger2003},
it is already known that there is no algorithm for deciding 
whether a $k$-variety has a $k$-point.
Therefore, if $k$ is a global function field of characteristic not $2$,
and we assume that \cites{CT-Sansuc-SD1987I,CT-Sansuc-SD1987II} 
works over $k$,
then there is no algorithm for deciding whether a projective
geometrically integral $k$-variety has a $k$-point
(and if we moreover assume resolution of singularities, 
we can add the adjective ``regular'' in this final statement).

\section{Open questions}\label{S:questions}

\begin{enumerate}
\item[(i)]
Can one generalize Remark~\ref{R:many remarks}(f)
to show that to have algorithms in (i) and (ii)
of Theorem~\ref{T:algorithms} for $n$-folds,
it would suffice to be able to decide the existence of rational points
on regular projective geometrically integral {\em $(n+2)$-folds}.
\item[(ii)]
Is there a proof of Proposition~\ref{P:Chatelet}
that does not require such explicit calculations?
\item[(iii)]
Is the problem of deciding whether a smooth projective geometrically
integral {\em hypersurface} over $k$ has a $k$-point
also equivalent to the problem for arbitrary $k$-varieties?
\end{enumerate}

\section*{Acknowledgements} 

I thank Jean-Louis Colliot-Th\'el\`ene for several helpful comments,
Brian Conrad for encouraging me to examine the function field analogue,
Thomas Graber for a remark leading to Proposition~\ref{P:function fields over C},
and J.\ Felipe Voloch and Olivier Wittenberg for suggesting some references.

\begin{bibdiv}
\begin{biblist}


\bib{Colliot-Thelene1998}{article}{
  author={Colliot-Th{\'e}l{\`e}ne, J.-L.},
  title={The Hasse principle in a pencil of algebraic varieties},
  conference={ title={Number theory}, address={Tiruchirapalli}, date={1996}, },
  book={ series={Contemp. Math.}, volume={210}, publisher={Amer. Math. Soc.}, place={Providence, RI}, },
  date={1998},
  pages={19--39},
  review={\MR {1478483 (98g:11075)}},
}

\bib{CT-Coray-Sansuc1980}{article}{
  author={Colliot-Th{\'e}l{\`e}ne, Jean-Louis},
  author={Coray, Daniel},
  author={Sansuc, Jean-Jacques},
  title={Descente et principe de Hasse pour certaines vari\'et\'es rationnelles},
  language={French},
  journal={J. Reine Angew. Math.},
  volume={320},
  date={1980},
  pages={150--191},
  issn={0075-4102},
  review={\MR {592151 (82f:14020)}},
}

\bib{CT-Poonen2000}{article}{
  author={Colliot-Th{\'e}l{\`e}ne, Jean-Louis},
  author={Poonen, Bjorn},
  title={Algebraic families of nonzero elements of Shafarevich-Tate groups},
  journal={J. Amer. Math. Soc.},
  volume={13},
  date={2000},
  number={1},
  pages={83\ndash 99},
  issn={0894-0347},
  review={MR1697093 (2000f:11067)},
}

\bib{CT-Sansuc-SD1987I}{article}{
  author={Colliot-Th{\'e}l{\`e}ne, Jean-Louis},
  author={Sansuc, Jean-Jacques},
  author={Swinnerton-Dyer, Peter},
  title={Intersections of two quadrics and Ch\^atelet surfaces. I},
  journal={J. Reine Angew. Math.},
  volume={373},
  date={1987},
  pages={37--107},
  issn={0075-4102},
  review={\MR {870307 (88m:11045a)}},
}

\bib{CT-Sansuc-SD1987II}{article}{
  author={Colliot-Th{\'e}l{\`e}ne, Jean-Louis},
  author={Sansuc, Jean-Jacques},
  author={Swinnerton-Dyer, Peter},
  title={Intersections of two quadrics and Ch\^atelet surfaces. II},
  journal={J. Reine Angew. Math.},
  volume={374},
  date={1987},
  pages={72--168},
  issn={0075-4102},
  review={\MR {876222 (88m:11045b)}},
}

\bib{Eisentraeger2003}{article}{
  author={Eisentr{\"a}ger, Kirsten},
  title={Hilbert's tenth problem for algebraic function fields of characteristic 2},
  journal={Pacific J. Math.},
  volume={210},
  date={2003},
  number={2},
  pages={261--281},
  issn={0030-8730},
  review={\MR {1988534 (2004d:12014)}},
}

\bib{Faltings1983}{article}{
  author={Faltings, G.},
  title={Endlichkeitss\"atze f\"ur abelsche Variet\"aten \"uber Zahlk\"orpern},
  language={German},
  journal={Invent. Math.},
  volume={73},
  date={1983},
  number={3},
  pages={349\ndash 366},
  issn={0020-9910},
  review={MR718935 (85g:11026a)},
  translation={ title={Finiteness theorems for abelian varieties over number fields}, booktitle={Arithmetic geometry (Storrs, Conn., 1984)}, pages={9\ndash 27}, translator = {Edward Shipz}, publisher={Springer}, place={New York}, date={1986}, note={Erratum in: Invent.\ Math.\ {\bf 75} (1984), 381}, },
}

\bib{Iskovskikh1971}{article}{
  author={Iskovskikh, V. A.},
  title={A counterexample to the Hasse principle for systems of two quadratic forms in five variables},
  language={Russian},
  journal={Mat. Zametki},
  volume={10},
  date={1971},
  pages={253--257},
  issn={0025-567X},
  review={\MR {0286743 (44 \#3952)}},
}

\bib{Kim2003}{article}{
  author={Kim, Minhyong},
  title={Relating decision and search algorithms for rational points on curves of higher genus},
  journal={Arch. Math. Logic},
  volume={42},
  date={2003},
  number={6},
  pages={563--568},
  issn={0933-5846},
  review={\MR {2001059 (2004f:14039)}},
}

\bib{Lang1954}{article}{
  author={Lang, Serge},
  title={Some applications of the local uniformization theorem},
  journal={Amer. J. Math.},
  volume={76},
  date={1954},
  pages={362--374},
  issn={0002-9327},
  review={\MR {0062722 (16,7a)}},
}

\bib{Nishimura1955}{article}{
  author={Nishimura, Hajime},
  title={Some remarks on rational points},
  journal={Mem. Coll. Sci. Univ. Kyoto. Ser. A. Math.},
  volume={29},
  date={1955},
  pages={189--192},
  review={\MR {0095851 (20 \#2349)}},
}

\bib{Pheidas1991}{article}{
  author={Pheidas, Thanases},
  title={Hilbert's tenth problem for fields of rational functions over finite fields},
  journal={Invent. Math.},
  volume={103},
  date={1991},
  number={1},
  pages={1--8},
  issn={0020-9910},
  review={\MR {1079837 (92e:11145)}},
}

\bib{Samuel1966}{article}{
  author={Samuel, Pierre},
  title={Compl\'ements \`a un article de Hans Grauert sur la conjecture de Mordell},
  language={French},
  journal={Inst. Hautes \'Etudes Sci. Publ. Math.},
  number={29},
  date={1966},
  pages={55\ndash 62},
  issn={0073-8301},
  review={MR0204430 (34 \#4272)},
}

\bib{SerreMordellWeil}{book}{
  author={Serre, Jean-Pierre},
  title={Lectures on the Mordell-Weil theorem},
  series={Aspects of Mathematics},
  edition={3},
  note={Translated from the French and edited by Martin Brown from notes by Michel Waldschmidt; With a foreword by Brown and Serre},
  publisher={Friedr. Vieweg \& Sohn},
  place={Braunschweig},
  date={1997},
  pages={x+218},
  isbn={3-528-28968-6},
  review={MR1757192 (2000m:11049)},
}

\bib{Shlapentokh1992functionfield}{article}{
  author={Shlapentokh, Alexandra},
  title={Hilbert's tenth problem for rings of algebraic functions in one variable over fields of constants of positive characteristic},
  journal={Trans. Amer. Math. Soc.},
  volume={333},
  date={1992},
  number={1},
  pages={275--298},
  issn={0002-9947},
  review={\MR {1091233 (92m:11144)}},
}

\bib{Skorobogatov2001}{book}{
  author={Skorobogatov, Alexei},
  title={Torsors and rational points},
  series={Cambridge Tracts in Mathematics},
  volume={144},
  publisher={Cambridge University Press},
  place={Cambridge},
  date={2001},
  pages={viii+187},
  isbn={0-521-80237-7},
  review={MR1845760 (2002d:14032)},
}

\bib{Smorynski1991}{book}{
  author={Smory{\'n}ski, Craig},
  title={Logical number theory. I},
  series={Universitext},
  note={An introduction},
  publisher={Springer-Verlag},
  place={Berlin},
  date={1991},
  pages={x+405},
  isbn={3-540-52236-0},
  review={\MR {1106853 (92g:03001)}},
}

\bib{Szpiro1981}{article}{
  author={Szpiro, Lucien},
  title={Propri\'et\'es num\'eriques du faisceau dualisant relatif},
  language={French},
  pages={44--78},
  book={ title={S\'eminaire sur les Pinceaux de Courbes de Genre au Moins Deux}, series={Ast\'erisque}, volume={86}, publisher={Soci\'et\'e Math\'ematique de France}, },
  date={1981},
  review={\MR {642675 (83c:14020)}},
}

\bib{Szpiro1985}{article}{
  author={Szpiro, Lucien},
  title={Un peu d'effectivit\'e},
  language={French},
  note={Seminar on arithmetic bundles: the Mordell conjecture (Paris, 1983/84)},
  journal={Ast\'erisque},
  number={127},
  date={1985},
  pages={275--287},
  issn={0303-1179},
  review={\MR {801928}},
}

\bib{Videla1994}{article}{
  author={Videla, Carlos R.},
  title={Hilbert's tenth problem for rational function fields in characteristic $2$},
  journal={Proc. Amer. Math. Soc.},
  volume={120},
  date={1994},
  number={1},
  pages={249--253},
  issn={0002-9939},
  review={\MR {1159179 (94b:11122)}},
}

\end{biblist}
\end{bibdiv}

\end{document}